\theoremstyle{plain}
\newtheorem{Pocz}{Poczatek}[section]
\newtheorem{Proposition}[Pocz]{Proposition}
\newtheorem{Theorem}[Pocz]{Theorem}
\newtheorem{Corollary}[Pocz]{Corollary}
\newtheorem{Axiom}[Pocz]{Axiom}
\newtheorem{Axioms}[Pocz]{Axioms}
\newtheorem{Law}[Pocz]{Law}
\newtheorem{Notation}[Pocz]{Notation}
\newtheorem{Question}[Pocz]{Question}
\newtheorem{Problem}[Pocz]{Problem}
\newtheorem{Example}[Pocz]{Example}
\theoremstyle{definition}
\newtheorem{Definition}[Pocz]{Definition}
\theoremstyle{remark}
\newtheorem{Remark}[Pocz]{Remark}
\newtheorem{Exercise}[Pocz]{Exercise}
\numberwithin{equation}{section}
\title[
An approach to basic set theory and logic
]
   {An approach to basic set theory and logic}
\author{Jerzy Dydak}
\address{University of Tennessee, Knoxville, TN 37996}
\email{jdydak\@@utk.edu}
\date{ \today
}
\keywords{set-theoretic identities, tautologies}
\subjclass[2000]{Primary 01A72; Secondary 55M15, 54C15}
\begin{document}
\maketitle
\begin{center}
\today
\end{center}

\begin{abstract}
The purpose of this paper is to outline a simple set of axioms for basic set theory from which most fundamental facts can be derived. The key to the whole project is a new axiom of set theory which I dubbed \lq The Law of Extremes\rq . It allows for quick proofs of basic set-theoretic identities and logical tautologies, so it is also a good tool to aid one's memory.

I do not assume any exposure to euclidean geometry via axioms. Only an experience with transforming algebraic identities is required.

The idea is to get students to do proofs right from the get-go. In particular, I avoid entangling students in nuances of logic early on. Basic facts of logic are derived from set theory, not the other way around.
\end{abstract}

\section{Introduction}

There is a widespread perception that students entering universities these days lack mathematical reasoning. As a result they are rushed through Introduction to Abstract Mathematics courses in the hope they can catch up and take serious math courses as juniors and seniors. My personal experience is that getting to that level requires longer time. I grew up in a system where rigorous Euclidean planar geometry was taught in the ninth grade, continued with stereometry in the tenth grade, and culminating with trigonometry in the eleventh grade. This was in addition to having algebra every year, with scarcity of proofs in that subject.
Logic began at the university level with a course in Foundations of Mathematics. Even with a few years of doing a significant amount of proofs, many students struggled with that level of abstractness. I vividly remember students who liked Euclidean geometry but were put off by incantations of injection, surjection, bijection.

My point is that learning abstract logic and set theory is very hard and cramming a lot of material in a one semester course on introduction to abstract mathematics is not going to do much good. I want students to enjoy the axiomatic method the same way I did: I was given axioms and lots of problems to do on my own, not to regurgitate their solutions rapidly.

The purpose of this paper is to outline a way to do it. It is not as good as planar Euclidean geometry but it offers a few very basic axioms and invites students to do problems immediately and on their own.

Here is the \textbf{big picture} of what I do:\\
1. First, basic set identities are codified for the simplest interesting case (subsets of a one-point set). This leads to the standard model for minimal universe $SMU=\{\emptyset\}$ (or $SMU=\{0\}$ if one uses algebraic notation $0$ for the empty set). One should think of $SMU$ as the Bing Bang in physics: all basic laws of set theory and logic are present in $SMU$ and do not change with the expansion of the universe.\\
2. For any universe $U$, a path from $U$ to $SMU$ is provided in order to generate identities in $U$ (Identity Generating Machine \ref{LawGenerator}). The path $U\to SMU$ is a form of \textbf{universe contraction} as opposed to \textbf{universe expansion} from $U$ to $2^U$.\\
3. Logic (calculus of statements) is built based on set-theoretic concepts.\\
4. A path from logic to set-theory of $SMU$ is provided. A path that reduces tautologies to set-theoretic identities.\\
5. A path from set theory to logic is provided. A path that reduces set-theoretic identities to logical tautologies.

The most interesting axiom in 1) (The Law of Extremes) allows for proving of basic set-theoretic identities rather quickly and helps in recovering those identities if they are needed in a more complicated proof. The idea is not to get students bogged down in essentially circular reasoning but to direct them to the most interesting proofs in the beginning of anyone's mathematical education:\\
i. $\sqrt{2}$ being irrational;\\
ii. Existence of infinitely many primes;\\
iii. Uncountability of irrational numbers;\\
iv. Set-theoretic paradoxes.

Here is a specific example of what I am trying to accomplish: instead of training students how to prove basis identities of set-theory (example: $A\cap B=B\cap A$, $A\cup B=B\cup A$, $A\cap (B\cup C)=(A\cap B)\cup (A\cap C)$, etc)
using logic, I establish those identities quickly from basic axioms (The Law of Extremes being the most important one). Only then logic is introduced and students are directed to ponder really interesting questions (example: what functions preserve which set-theoretic identities). 

I am grateful to Chuck Collins for conversations leading to improvements in the paper.

\section{Prerequisites}

There are three skills needed to embark on learning basic set theory and logic from these notes. One is understanding of basic algebraic definitions (first four problems below).

\begin{Problem}
Assume only the knowledge of addition and multiplication of
real numbers.
 What do we mean by $u^{-1}=v$? Can this equality be verified?
\end{Problem}

\begin{Problem}
Assume only the knowledge of addition and multiplication of
real numbers.
 What do we mean by $\frac{u}{v}=w$? Can this equality be verified?
\end{Problem}

\begin{Problem}
Suppose $f(x)$ is a given real-valued function.
 What do we mean by the statement: 
$$ \textrm{Solve }f(x)=0?$$
\end{Problem}

\begin{Problem}
Assume only the knowledge of addition and multiplication of
real numbers.
 What do we mean by $\sqrt{u}=v$? Can this equality be verified?
\end{Problem}

The next five problems are designed to test implimentation of basic definitions (students who simply follow algorithms usually have wrong solutions to those problems).

\begin{Problem}
Solve $(3\cdot x-9)^{-1}=(4\cdot x-12)^{-1}$. Show your work.
\end{Problem}

\begin{Problem}
Solve $(2\cdot x-4)^{-1}=(3\cdot x-6)^{-1}$. Show your work.
\end{Problem}

\begin{Problem}
Solve $x^2=8^{2}-2\cdot 8\cdot 3 +3^{2}$ without the square root function. Show your work.
\end{Problem}

\begin{Problem}
Solve $(2\cdot x-8)^{1/2}=(x-5)^{1/2}$. Show your work.
\end{Problem}

\begin{Problem}
Solve $(x^2-8\cdot x+14)^{1/2}=(6-2\cdot x)^{1/2}$. Show your work.
\end{Problem}

The final four problems test understanding of basic algebraic derivations.
\begin{Problem}
Derive $(a+b)\cdot (c+d)=ac+ad+bc+bd$ from $u\cdot (v+w)=uv+uw$.
\end{Problem}

\begin{Problem}
Derive $(a+b)^{2}=a^{2}+2ab+b^{2}$ from $u\cdot (v+w)=uv+uw$.
\end{Problem}

\begin{Problem}
Derive $a^2-b^2=(a-b)\cdot (a+b)$ from $u\cdot (v+w)=uv+uw$.
\end{Problem}

\begin{Problem}
Derive $(a+b)^{3}=a^{3}+3ab^2+3a^2b+b^{3}$ from $u\cdot (v+w)=uv+uw$.
\end{Problem}

\section{Basic laws of set theory}

In this section basic identities of set theory will be developed. The idea is to treat sets as a primitive concept and create basic axioms governing identities from which all relevant identities can be derived. Delving into the issue of which point belongs where is not necessary. The main concern is with equality of sets. Logic will be minimized as much as possible.

It is asumed that all objects considered by us belong to a non-empty \textbf{universe} $U$. It is a set (we treat the notion of a set as a primitive concept that does not need to be defined). All sets considered have elements belonging to $U$ (i.e. they are subsets of $U$).

\begin{Definition}\label{MinimalUniverseDef}
By a \textbf{minimal universe} $MU$ we mean a set consisting of just one element. $MU=\{x\}$
means $MU$ is a minimal universe and $x$ is its unique element.\\
The \textbf{standard model for a minimal universe} is $SMU=\{\emptyset\}$, the set whose only element is the empty set $\emptyset$ (the set that contains no elements).\\
Sometimes we will employ algebraic notation and declare $\emptyset=0$ in which case $SMU=\{0\}$ will be denoted by $1$.
\end{Definition}

Given two sets $A$, $B$ one considers the following sets:\\
a. Their \textbf{intersection} $A\cap B$ consisting of all elements that belong to both of these sets,\\
b. Their \textbf{union} $A\cup B$ consisting of all elements that belong to at least one of these sets,\\
c. Their \textbf{difference} $A\setminus B$ consisting of all elements that belong to $A$ but not to $B$.\\

The following are basic axioms for operators $\cap$, $\cup$, and $\setminus$:

\begin{Axioms}[Basic Axioms of Set Theory]\label{BasicAxioms}
For any minimal universe $MU$ one has
\par\noindent
1. $MU\cup MU=MU\cap MU=MU$.\\
2. $\emptyset\cup MU=MU\cup\emptyset=MU$.\\
3. $\emptyset\cap MU=MU\cap\emptyset=\emptyset$.\\
4. $MU\setminus MU= \emptyset $.\\
5. $MU\setminus\emptyset=MU$.\\
6. $\emptyset\setminus MU=\emptyset$.\\
\end{Axioms}

Axioms \ref{BasicAxioms} may be used to create new identities. Here is an example: $A\cup (SMU\setminus A)=SMU$ is an identity in the universe $SMU$. Indeed, there are only two possibilities for $A$: $A=\emptyset$ or $A=SMU$. Students should be able to consider both of them and see that the new identity is valid.

The next law provides a mechanism for creation of identities in any universe $U$. We simply reduce a conjectured identity to $SMU$ and verify it there. Example: $A\cup (U\setminus A)=U$ translates to $A\cup (SMU\setminus A)=SMU$ and this one was already verified. The recipe for translation is simple: replace all ocurrences of $U$ by $SMU$.

\begin{Law}[Identity Generating Machine]\label{LawGenerator}
Any equality of sets expressed in terms of union, intersection, and difference is valid if it translates to an identity in the standard minimal universe $SMU$.
\end{Law}
\textbf{Justification:} Suppose an element $x$ belongs to only one side of conjectured equality.
Since $x$ does not interact with any other elements, we may remove all of them.
The result is an equality where all non-empty sets are equal to $\{x\}$. Now we can replace $\{x\}$ by $SMU$ in all occurences and the rest of sets are replaced by the empty set.
$\square$

The Justification is the only place in this section where the issue of points belonging to sets is raised.

\begin{Example}
Let's see how the justification works in the case of equality $A\cap B=B\cap A$. If there is a point $x$ belonging only to the left side, notice all other points do not interact with $x$, so their participation is irrelevant and we may remove them from both $A$ and $B$. That results in converting each of $A$, $B$ to either the empty set or a set equal to $\{x\}$. Since labels are irrelevant, one may simply consider $\{x\}=SMU$ and thus reduce all equalities of the form $A\cap B=B\cap A$ to only those where $A$ and $B$ live in $SMU$. If that equality is verified for the universe $SMU$, then existence of $x$ belonging only to the left side of $A\cap B=B\cap A$ is not possible.
\end{Example}

\begin{Corollary}\label{IntermediateStep} For every subset $A$ of a universe $U$ the following holds:\\
\noindent
1. $A\cup A=A\cap A=A$.\\
2. $\emptyset\cup A=A\cup\emptyset=A$.\\
3. $\emptyset\cap A=A\cap\emptyset=\emptyset$.\\
4. $A\setminus A= \emptyset $.\\
5. $A\setminus\emptyset=A$.\\
6. $\emptyset\setminus A=\emptyset$.\\
7. $A\cup U=U$.\\
8. $A\cap U=A$.
\end{Corollary}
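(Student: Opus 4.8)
The plan is to treat all eight equalities uniformly as applications of the Identity Generating Machine (Law~\ref{LawGenerator}). Each claim is an equality of subsets of $U$ built only from the operators $\cup$, $\cap$, and $\setminus$ together with the constants $\emptyset$ and $U$, so the Machine applies verbatim: it suffices to translate each equality by replacing every occurrence of $U$ with $SMU$ and then verify that the translated equality holds in $SMU$. For items 1--6 no literal $U$ appears, but $A$ is a subset of $U$, so under the translation $A$ becomes an arbitrary subset of $SMU$; for items 7 and 8 the symbol $U$ is replaced outright, yielding the translated targets $A\cup SMU=SMU$ and $A\cap SMU=A$.

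The verification inside $SMU$ is a finite check, and this is where the argument collapses to something routine. Since $SMU=\{\emptyset\}$ has exactly two subsets, namely $\emptyset$ and $SMU$ itself, each translated identity splits into just the two cases $A=\emptyset$ and $A=SMU$. In the case $A=SMU$, items 1--6 are literally the six statements of the Basic Axioms~\ref{BasicAxioms} (with the minimal universe taken to be $SMU$), so nothing remains to prove there. In the case $A=\emptyset$, every one of items 1--6 reduces to an equality among copies of $\emptyset$ that is immediate from Axioms~\ref{BasicAxioms} read with the minimal universe collapsed, e.g.\ $\emptyset\cup\emptyset=\emptyset$ and $\emptyset\setminus\emptyset=\emptyset$.

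For items 7 and 8 I would run the same two-case check on the translated forms. For $A\cup SMU=SMU$: when $A=\emptyset$ this is Axiom~2, and when $A=SMU$ it is Axiom~1. For $A\cap SMU=A$: when $A=\emptyset$ this is Axiom~3, and when $A=SMU$ it is again Axiom~1. Having exhausted both cases for all eight translated identities, Law~\ref{LawGenerator} promotes each to a valid identity for an arbitrary subset $A$ of an arbitrary universe $U$. There is no deep obstacle here; the only point demanding care is the bookkeeping that justifies invoking the Machine, namely confirming that each claim is phrased solely in terms of $\cup$, $\cap$, $\setminus$ and the constants $\emptyset$ and $U$, so that the translation recipe is legitimate. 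Once that is granted, the finiteness of the collection of subsets of $SMU$ reduces every identity to a two-line case analysis.
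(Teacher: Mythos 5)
Your proposal is correct and takes essentially the same approach as the paper, which derives the Corollary exactly by combining Axioms \ref{BasicAxioms} with the Identity Generating Machine \ref{LawGenerator}: translate each equality by replacing $U$ with $SMU$ and verify the two cases $A=\emptyset$ and $A=SMU$ there. One small caution: in the case $A=\emptyset$ the equalities $\emptyset\cup\emptyset=\emptyset$, $\emptyset\cap\emptyset=\emptyset$, $\emptyset\setminus\emptyset=\emptyset$ are not literally instances of Axioms \ref{BasicAxioms} (every axiom mentions $MU$, which is by definition a one-element set, so it cannot be \lq\lq collapsed\rq\rq\ to $\emptyset$); these degenerate cases are instead immediate from the stated definitions of union, intersection, and difference.
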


Corollary \ref{IntermediateStep} is the intermediate step before deriving main identities of basic set theory. One uses Axioms and \ref{LawGenerator} to derive \ref{IntermediateStep}.

The following is an easy consequence of \ref{LawGenerator} and is easier to use in practice. 
\begin{Theorem}[Law of Extremes]\label{LawOfExtremes}
Any equality of sets expressed in terms of union, interesection, and difference is valid if it holds in all extreme cases: each set being either empty or
 equal to the current universe $U$.
\end{Theorem}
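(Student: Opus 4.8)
The plan is to deduce the Law of Extremes directly from the Identity Generating Machine (Law \ref{LawGenerator}), so that the whole argument is a short translation between two ways of phrasing one and the same finite check. First I would unpack what ``verifying an identity in $SMU$'' actually means. Since $SMU=\{\emptyset\}$ is a one-point set, its only subsets are $\emptyset$ and $SMU$ itself. Hence an equality is an identity in $SMU$ precisely when, running over every assignment in which each set variable is set equal to $\emptyset$ or to $SMU$, the two sides agree. This converts the hypothesis of Law \ref{LawGenerator} into a finite collection of case-checks.

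Next I would build the dictionary between the extreme subsets of $U$ and those of $SMU$ via the correspondence $\emptyset \mapsto \emptyset$ and $U \mapsto SMU$. The point to verify is that the two-element collection $\{\emptyset, U\}$ is closed under $\cup$, $\cap$, and $\setminus$, and that these operations act on $\{\emptyset, U\}$ exactly as the corresponding operations act on $\{\emptyset, SMU\}$. This is immediate: specializing Corollary \ref{IntermediateStep} to $A=\emptyset$ and $A=U$ supplies all the needed closure relations for $U$, while the Basic Axioms \ref{BasicAxioms} supply the matching relations for $SMU$. Thus substituting $U$ for $SMU$ throughout carries each extreme evaluation in $U$ to the corresponding evaluation in $SMU$, and equal values go to equal values.

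With this dictionary the theorem follows. Suppose the conjectured equality holds in all extreme cases in $U$, i.e.\ for every assignment of $\emptyset$ or $U$ to its variables. Transporting each such assignment along $U \mapsto SMU$ yields exactly the assignments of $\emptyset$ or $SMU$ to the variables, and carries equal values to equal values; so the translated equality holds for every subset choice in $SMU$, which is to say it is an identity in $SMU$. By Law \ref{LawGenerator} the original equality is valid. The converse is trivial, since a valid equality holds in particular in the extreme cases, so the two conditions coincide.

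I expect the only step meriting any care to be the closure claim of the second paragraph — that $\{\emptyset, U\}$ is genuinely stable under all three operations and that its operation tables coincide with those of $SMU$. This is exactly where the Basic Axioms and Corollary \ref{IntermediateStep} do their work, and it is precisely because these tables match that one may dispense with the language of $SMU$ altogether and simply test each variable at its two extremes $\emptyset$ and $U$; that restatement is the practical convenience the theorem is meant to capture.
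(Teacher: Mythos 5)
Your proposal is correct and follows exactly the route the paper intends: the paper states the Law of Extremes as ``an easy consequence of \ref{LawGenerator}'' without writing out details, and your argument is precisely that derivation, with the implicit step made explicit (the operation tables of $\{\emptyset,U\}$ under $\cup$, $\cap$, $\setminus$ match those of $2^{SMU}=\{\emptyset,SMU\}$, so checking extreme cases in $U$ is the same finite check as verifying the translated identity in $SMU$). Your filled-in dictionary between $\{\emptyset,U\}$ and $\{\emptyset,SMU\}$, justified by Corollary \ref{IntermediateStep} and the Basic Axioms, is a faithful and complete expansion of what the paper leaves to the reader.
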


 The next step is to use \ref{IntermediateStep} and the Law of Extremes to prove propositions below.

\begin{Proposition}[Commutativity of the Union]
$A\cup B=B\cup A$ for all sets $A,B$.
\end{Proposition}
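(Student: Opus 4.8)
The plan is to apply the Law of Extremes (Theorem \ref{LawOfExtremes}) directly, since this is precisely the mechanism the paper has just set up for verifying set-theoretic identities. The strategy is to reduce the universal claim $A\cup B=B\cup A$ to a finite check: by the Law of Extremes, it suffices to verify the equality in every extreme case, where each of $A$ and $B$ is independently either $\emptyset$ or $U$. Since there are two sets and each has two extreme values, this produces exactly four cases to examine.

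First I would enumerate the four extreme configurations: $(A,B)=(\emptyset,\emptyset)$, $(\emptyset,U)$, $(U,\emptyset)$, and $(U,U)$. For each, I would compute both sides using the facts already established in Corollary \ref{IntermediateStep}. In the case $(\emptyset,\emptyset)$, both sides equal $\emptyset\cup\emptyset=\emptyset$ by part 2. In the symmetric cases $(\emptyset,U)$ and $(U,\emptyset)$, one uses parts 2 and 7 to see that both $A\cup B$ and $B\cup A$ reduce to $U$. In the case $(U,U)$, part 1 gives $U\cup U=U$ on both sides. In every case the two sides agree, so the hypothesis of the Law of Extremes is met.

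Having checked all four extreme cases, I would then invoke Theorem \ref{LawOfExtremes} to conclude that $A\cup B=B\cup A$ holds for all subsets $A,B$ of $U$. This completes the proof. The entire argument is a routine finite verification once the Law of Extremes is in hand.

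I do not expect any genuine obstacle here, since the whole point of the preceding development is that commutativity of the union is exactly the kind of identity the Law of Extremes trivializes. The only thing to be careful about is making sure each of the four cases is actually an \emph{equality} of the two sides rather than merely computing one side; but because the extreme values $\emptyset$ and $U$ are symmetric under swapping $A$ and $B$, the configurations themselves pair up symmetrically, which makes the agreement transparent. If anything, the pedagogical subtlety is resisting the temptation to prove commutativity by the usual elementwise logical argument, which the paper deliberately wants to postpone.
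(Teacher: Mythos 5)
Your proposal is correct and is exactly the argument the paper intends: the text preceding the proposition instructs the reader to combine Corollary \ref{IntermediateStep} with the Law of Extremes (Theorem \ref{LawOfExtremes}), and your four-case verification using parts 1, 2, and 7 of that corollary is precisely that routine check. Nothing is missing.
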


\begin{Proposition}[Commutativity of the Intersection]
$A \cap B=B \cap A$ for all sets $A,B$.
\end{Proposition}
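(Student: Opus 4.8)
The plan is to apply the Law of Extremes (\ref{LawOfExtremes}) directly, since commutativity of intersection is exactly the kind of statement that theorem is designed to dispatch. The equality $A\cap B=B\cap A$ is expressed purely in terms of the intersection operator, so by \ref{LawOfExtremes} it suffices to verify it in all extreme cases, where each of $A$ and $B$ is either $\emptyset$ or the current universe $U$. With two sets and two choices each, there are exactly four cases to check.

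First I would enumerate the four cases systematically: $(A,B)=(\emptyset,\emptyset)$, $(\emptyset,U)$, $(U,\emptyset)$, and $(U,U)$. In each case both sides reduce to an expression that I can evaluate using \ref{IntermediateStep}. For instance, when $A=\emptyset$ and $B=U$, the left side is $\emptyset\cap U=\emptyset$ by part 3 of \ref{IntermediateStep}, and the right side is $U\cap\emptyset=\emptyset$ by the same part, so the two agree. When $A=U$ and $B=\emptyset$, the roles are swapped and both sides again equal $\emptyset$. The case $(U,U)$ gives $U\cap U=U$ on both sides by part 1, and $(\emptyset,\emptyset)$ gives $\emptyset\cap\emptyset=\emptyset$ on both sides, again by part 1.

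Having confirmed the equality holds in each of the four extreme cases, the Law of Extremes immediately yields that $A\cap B=B\cap A$ for all subsets $A,B$ of $U$. The same four-case verification handles the companion statement on commutativity of the union, using parts 1 and 2 of \ref{IntermediateStep} in place of parts 1 and 3.

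There is essentially no hard part here, which is precisely the pedagogical point of the Law of Extremes: a statement that would traditionally require an element-chasing argument with logical equivalences (``$x\in A\cap B$ iff $x\in A$ and $x\in B$ iff $x\in B$ and $x\in A$ iff $x\in B\cap A$'') is reduced to a finite, mechanical case check. The only place one must be slightly careful is in recognizing that the four listed cases genuinely exhaust the extreme configurations and that each evaluation cites the correct line of \ref{IntermediateStep}; beyond that bookkeeping, the proof is routine.
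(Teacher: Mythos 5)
Your proof is correct and is exactly the approach the paper intends: the paper leaves this proposition as an application of the Law of Extremes (\ref{LawOfExtremes}) together with Corollary \ref{IntermediateStep}, which is precisely your four-case verification. The case analysis and the citations to parts 1 and 3 of \ref{IntermediateStep} are all accurate, so there is nothing to add.
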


\begin{Proposition}[Distributive Property for the Union]
$A\cup (B\cap C)=(A\cup
B)\cap (A\cup C)$ for all sets $A,B,C$.
\end{Proposition}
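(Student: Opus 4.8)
The plan is to apply the Law of Extremes (Theorem \ref{LawOfExtremes}) directly. Since the proposed identity $A\cup(B\cap C)=(A\cup B)\cap(A\cup C)$ is expressed purely in terms of union and intersection, it suffices to verify it in the extreme cases, where each of $A$, $B$, $C$ is independently either $\emptyset$ or the current universe $U$. A priori this amounts to checking the $2^3=8$ substitutions, and in each case both sides are evaluated using Corollary \ref{IntermediateStep}.

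The step I would emphasize is that the case analysis collapses if one splits only on $A$. If $A=U$, then the left side equals $U\cup(B\cap C)=U$ by part 7 of Corollary \ref{IntermediateStep}, while on the right one has $A\cup B=U$ and $A\cup C=U$ by the same part, so the right side equals $U\cap U=U$ by part 1; both sides are therefore $U$ regardless of $B$ and $C$. If instead $A=\emptyset$, then the left side equals $\emptyset\cup(B\cap C)=B\cap C$ by part 2, while on the right $A\cup B=B$ and $A\cup C=C$ by part 2, so the right side equals $B\cap C$; the two sides again coincide. Since these computations cover every extreme value of $A$ without imposing any constraint on $B$ or $C$, all eight extreme cases are settled at once.

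I expect the main obstacle to be organizational rather than mathematical. One should first confirm that the Law of Extremes genuinely applies, which it does here because the expression uses no operation beyond union, intersection, and difference. The only subtlety worth flagging to a student is the temptation to grind through all eight substitutions: the cleaner route is to observe that, once $A$ is fixed at an extreme value, the identity reduces to one of the trivial equalities $U=U$ or $B\cap C=B\cap C$. Having verified both extreme values of $A$, the conclusion follows immediately from Theorem \ref{LawOfExtremes}.
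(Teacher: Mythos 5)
Your proposal is correct and follows exactly the route the paper intends: its proof is precisely the hint ``Notice that $A$ plays a crucial role here. What happens if $A=\emptyset$? What about $A=U$?'', i.e.\ split only on the extreme values of $A$ and observe that both sides reduce to $U$ or to $B\cap C$ regardless of $B$ and $C$. Your write-up simply fills in the computations via Corollary \ref{IntermediateStep} and the appeal to Theorem \ref{LawOfExtremes}, which is what the paper leaves to the reader.
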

\begin{proof}
All possibilities for $A,B,C$ need not be tested. That would amount to using truth tables, a dreadful way of teaching \lq proofs\rq . Notice that $A$ plays a crucial role here. What happens if $A=\emptyset$? What about $A=U$?
\end{proof}

\begin{Proposition}[Distributive Property for the Intersection]
$A\cap (B\cup C)=(A\cap B)\cup (A\cap C)$ for all sets $A,B,C$.
\end{Proposition}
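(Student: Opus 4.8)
The plan is to invoke the Law of Extremes (\ref{LawOfExtremes}), which reduces the claim to verifying the identity only in the extreme cases where each of $A$, $B$, $C$ is either $\emptyset$ or $U$. Rather than grind through all eight combinations as a truth table (precisely the approach the preceding proposition warns against), I would exploit the asymmetric role of $A$: it is the set distributed across the union, so a single case split on $A$ should collapse both sides at once, mirroring the hint given for the Distributive Property for the Union.

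First I would take $A=\emptyset$. By part 3 of Corollary \ref{IntermediateStep}, the left-hand side $\emptyset\cap(B\cup C)$ equals $\emptyset$, while each term on the right, $\emptyset\cap B$ and $\emptyset\cap C$, is likewise $\emptyset$; part 2 then yields $\emptyset\cup\emptyset=\emptyset$. Hence both sides reduce to $\emptyset$ for every $B$ and $C$. Next I would take $A=U$. Using part 8 of Corollary \ref{IntermediateStep} together with the already-established commutativity of intersection, the left-hand side becomes $U\cap(B\cup C)=B\cup C$, and the right-hand side becomes $(U\cap B)\cup(U\cap C)=B\cup C$. So both sides reduce to $B\cup C$ for every $B$ and $C$.

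Because each extreme choice of $A$ forces the identity to hold for arbitrary $B$ and $C$ — in particular whenever $B$ and $C$ are themselves extreme — all of the extreme cases required by the Law of Extremes are accounted for, and the proposition follows. The one point I expect to need care is that the simplifications drawn from Corollary \ref{IntermediateStep} are valid for all subsets, not merely extreme ones; this is exactly what permits the single split on $A$ to stand in for the full case analysis. Confirming that fixing $A$ genuinely reduces the remaining expression to a tautology in $B$ and $C$ is therefore the only delicate step, and it is routine once the relevant parts of Corollary \ref{IntermediateStep} and commutativity of intersection are in hand.
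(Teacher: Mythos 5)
Your proof is correct and matches the paper's intended approach: the paper proves the companion Distributive Property for the Union by exactly this device --- invoke the Law of Extremes but split only on the distinguished set $A$ (``What happens if $A=\emptyset$? What about $A=U$?'') rather than checking all eight extreme cases --- and your argument carries that same pattern over to the intersection, with the simplifications via Corollary \ref{IntermediateStep} and commutativity handled correctly.
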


\begin{Proposition}[De Morgan Laws]
$A\setminus (B\cup C)=(A \setminus B)\cap (A \setminus C)$ 
and $A\setminus (B\cap C)=(A \setminus B)\cup (A \setminus C)$ for all sets $A,B,C$.
\end{Proposition}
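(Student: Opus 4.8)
The plan is to apply the Law of Extremes (Theorem \ref{LawOfExtremes}) to each of the two De Morgan identities separately. By that theorem, it suffices to verify each equality in all extreme cases, where every set $A$, $B$, $C$ is independently either $\emptyset$ or the universe $U$. The key tool throughout will be Corollary \ref{IntermediateStep}, which supplies the values of all the elementary operations once the arguments are set to $\emptyset$ or $U$.

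First I would treat the identity $A\setminus (B\cup C)=(A\setminus B)\cap(A\setminus C)$. The crucial observation is that the set $A$ plays the dominant role: if $A=\emptyset$, then by parts 3 and 6 of Corollary \ref{IntermediateStep} both sides collapse to $\emptyset$, and the remaining choices of $B$ and $C$ become irrelevant. So the only substantive case is $A=U$, which reduces the identity to $U\setminus(B\cup C)=(U\setminus B)\cap(U\setminus C)$; here I would run through the four combinations of $B,C\in\{\emptyset,U\}$ using parts 2, 4, 5, and 7 of Corollary \ref{IntermediateStep} to evaluate each side. For example, with $B=U$ and $C=\emptyset$ the left side is $U\setminus U=\emptyset$ while the right side is $(U\setminus U)\cap(U\setminus\emptyset)=\emptyset\cap U=\emptyset$.

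Next I would treat the dual identity $A\setminus(B\cap C)=(A\setminus B)\cup(A\setminus C)$ in exactly the same fashion: the case $A=\emptyset$ again forces both sides to $\emptyset$, and the case $A=U$ reduces to checking $U\setminus(B\cap C)=(U\setminus B)\cup(U\setminus C)$ over the four extreme values of $B$ and $C$. Rather than tabulating all sixteen combinations (eight per identity), I expect the observation that $A=\emptyset$ trivializes everything to cut the real work down to a handful of small evaluations.

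The only mild obstacle is bookkeeping: there are two identities, and one must be careful to invoke the correct part of Corollary \ref{IntermediateStep} at each evaluation and not to conflate the two De Morgan laws. But since the Law of Extremes has already done the conceptual heavy lifting by eliminating the need for truth tables, no genuine difficulty remains—each extreme case is settled by a single application of the elementary identities already established.
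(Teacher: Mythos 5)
Your proof is correct and follows exactly the route the paper intends: the paper leaves De Morgan to the reader but models the argument in its proof of the Distributive Property for the Union (apply the Law of Extremes \ref{LawOfExtremes}, notice that $A$ plays the crucial role, dispose of $A=\emptyset$ at once, and settle $A=U$ using the elementary identities of Corollary \ref{IntermediateStep}). Your only deviation is one of economy---after setting $A=U$ you enumerate all four extreme choices of $B,C$, where splitting on $B$ alone (leaving $C$ arbitrary, since for instance $U\setminus(U\cup C)=\emptyset=(U\setminus U)\cap(U\setminus C)$) would have sufficed---but that is mere bookkeeping, not a gap.
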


\section{Complement and symmetric difference}

This section is devoted to two useful operators on sets and its purpose is to allow students to practice development of identities based on basic identities and the Law of Extremes. Also, an evolution of the Law of Extremes with inclusion of more operators can be seen.
\begin{Definition}\label{SymmetricDifferenceDef}
The \textbf{symmetric difference} $A\Delta B$ of two sets consists of all elements that belong to exactly one of these sets. That means $A\Delta B:=(A\setminus B)\cup (B\setminus A)$.
\end{Definition}

\begin{Proposition} For all sets $A$ one has\\
1. $A\Delta A=\emptyset$.\\
2. $\emptyset\Delta A=A\Delta\emptyset=A$.
\end{Proposition}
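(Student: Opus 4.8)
The plan is to apply the Law of Extremes (\ref{LawOfExtremes}), since $A\Delta B$ is defined entirely in terms of $\setminus$ and $\cup$ (Definition \ref{SymmetricDifferenceDef}), and both identities are equalities of sets built from those operators. This means it suffices to check each identity in the extreme cases where every set appearing is either $\emptyset$ or the current universe $U$.

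For part 1, the only set-variable is $A$, so there are just two extreme cases to verify: $A=\emptyset$ and $A=U$. First I would unfold the definition, writing $A\Delta A=(A\setminus A)\cup(A\setminus A)$, and then apply the relevant parts of Corollary \ref{IntermediateStep}. In fact part 4 of \ref{IntermediateStep} already gives $A\setminus A=\emptyset$ for every subset $A$, so $A\Delta A=\emptyset\cup\emptyset=\emptyset$ by part 2 of \ref{IntermediateStep}; one can equally well just read off the two extreme cases directly, each collapsing to $\emptyset$.

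For part 2, I would again unfold the symmetric difference: $\emptyset\Delta A=(\emptyset\setminus A)\cup(A\setminus\emptyset)$. Using parts 5 and 6 of Corollary \ref{IntermediateStep}, namely $\emptyset\setminus A=\emptyset$ and $A\setminus\emptyset=A$, this reduces to $\emptyset\cup A$, which is $A$ by part 2 of \ref{IntermediateStep}. The claim $A\Delta\emptyset=A$ follows symmetrically (or, once commutativity of $\Delta$ is available, immediately), and can also be verified directly via the two extreme cases $A=\emptyset$ and $A=U$.

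Since each identity reduces to a finite check over the extreme substitutions $A\in\{\emptyset,U\}$, there is no real obstacle here; the work is entirely routine bookkeeping with the basic identities already established in Corollary \ref{IntermediateStep}. The only point worth flagging for a student is to resist the temptation to verify these by chasing individual elements, and instead to treat $\Delta$ as an abbreviation for its defining combination of $\setminus$ and $\cup$ so that the Law of Extremes applies cleanly.
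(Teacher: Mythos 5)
Your proof is correct and follows exactly the route the paper intends (the paper states this Proposition without proof, as practice in unfolding Definition \ref{SymmetricDifferenceDef} and applying Corollary \ref{IntermediateStep} together with the Law of Extremes). In fact your direct computations via parts 2, 4, 5, 6 of Corollary \ref{IntermediateStep} already cover arbitrary $A$, so the extreme-case check is a harmless redundancy rather than a needed step.
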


\begin{Theorem}\label{LawOfExtremesTwo}
Any equality of sets expressed in terms of union, interesection, difference, and symmetric difference is valid if it holds in all extreme cases: each set being either empty or
 equal to the universe $U$.
\end{Theorem}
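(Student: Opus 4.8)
The plan is to reduce Theorem \ref{LawOfExtremesTwo} to the already-established Law of Extremes (Theorem \ref{LawOfExtremes}) by eliminating every occurrence of the symmetric difference. By Definition \ref{SymmetricDifferenceDef}, the operator $\Delta$ is merely an abbreviation: $A\Delta B=(A\setminus B)\cup(B\setminus A)$, an identity that holds for \emph{all} sets $A,B$. So I would begin with an arbitrary conjectured equality $L=R$ whose two sides are built from some sets $A_1,\dots,A_n$ using $\cup$, $\cap$, $\setminus$, and $\Delta$, and rewrite it by replacing each subexpression of the form $P\Delta Q$ with $(P\setminus Q)\cup(Q\setminus P)$, repeating until no $\Delta$ remains. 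This produces a new equality $L'=R'$ in which $L'$ and $R'$ are expressed only in terms of $\cup$, $\cap$, and $\setminus$.

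First I would record two consequences of the substitution being a definitional unfolding. For any assignment of actual sets to $A_1,\dots,A_n$, the side $L$ and its rewrite $L'$ denote exactly the same set, and likewise $R=R'$; this is because at each step we only replaced a subexpression by something equal to it for all values of its inputs. Consequently $L=R$ is a valid identity if and only if $L'=R'$ is, and, crucially, for each particular choice of values (in particular each extreme choice where every $A_i$ is $\emptyset$ or $U$) the equality $L=R$ holds for that choice precisely when $L'=R'$ holds for it.

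With these observations in hand, the proof is immediate. Suppose $L=R$ holds in all extreme cases. By the second observation, $L'=R'$ then holds in all extreme cases as well. Since $L'=R'$ is expressed purely in terms of $\cup$, $\cap$, and $\setminus$, the Law of Extremes (Theorem \ref{LawOfExtremes}) applies and tells us $L'=R'$ is a valid identity. By the first observation, $L=R$ is therefore valid too, which is what we wanted.

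The only point that requires care, and the place I expect a skeptical student to push back, is the claim that unfolding $\Delta$ commutes with evaluation in the extreme cases, i.e. that substituting equal expressions into the extreme-case check cannot change the outcome. This is exactly the content of Definition \ref{SymmetricDifferenceDef}: because $(P\setminus Q)\cup(Q\setminus P)$ equals $P\Delta Q$ for every pair of sets $P,Q$, and the extreme assignments are themselves particular pairs of sets, the substitution leaves every extreme-case value untouched. No new truth-table computation is needed; the whole theorem rides on the previous Law of Extremes together with the fact that $\Delta$ was introduced by definition rather than as a new primitive.
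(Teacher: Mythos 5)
Your proof is correct, and it is essentially the argument the paper intends: Theorem \ref{LawOfExtremesTwo} is stated without an explicit proof, immediately after Definition \ref{SymmetricDifferenceDef} introduces $\Delta$ as an abbreviation for $(A\setminus B)\cup(B\setminus A)$, so the expected justification is precisely your definitional unfolding followed by an appeal to Theorem \ref{LawOfExtremes}. Your point that the substitution commutes with evaluation at every extreme assignment (because the unfolding preserves values under \emph{all} assignments to the atomic sets $A_1,\dots,A_n$, and both sides of $L=R$ and $L'=R'$ range over the same extreme cases) is exactly the step worth making explicit. For contrast, the paper does have a second technique in reserve: re-running the element-noninteraction Justification of the Identity Generating Machine \ref{LawGenerator} with the new operator included, which is how it later argues for the Extended Law of Extremes \ref{ExtendedLawOfExtremes}. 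That route is what one must use when a new operator is \emph{not} definable from the old ones; since $\Delta$ is so definable, your reduction is the simpler and fully adequate path, and it has the side benefit of extending verbatim to Theorem \ref{LawOfExtremesThree} once one notes $A^c=U\setminus A$.
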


\begin{Proposition}[Associativity of the Symmetric Difference]
$A\Delta (B\Delta C)=(A\Delta B)\Delta C$ for all sets $A,B,C$.
\end{Proposition}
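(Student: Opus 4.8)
The plan is to apply the machinery the paper has just built, namely Theorem \ref{LawOfExtremesTwo} (the extended Law of Extremes). Since the symmetric difference has now been added to the list of admissible operators, associativity $A\Delta(B\Delta C)=(A\Delta B)\Delta C$ is an equality of sets expressed in terms of union, intersection, difference, and symmetric difference. Therefore it suffices to verify it in all extreme cases, where each of $A$, $B$, $C$ is either $\emptyset$ or the universe $U$.

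First I would observe that, by the first Proposition of this section, $\Delta$ behaves on extreme inputs exactly like addition modulo $2$ if we identify $\emptyset$ with $0$ and $U$ with $1$: we have $\emptyset\Delta\emptyset=\emptyset$, $U\Delta\emptyset=\emptyset\Delta U=U$, and $U\Delta U=\emptyset$ (the last from part 1 of that Proposition). So on extreme values $\Delta$ is an associative operation for the trivial reason that addition mod $2$ is associative. Concretely, I would reduce each side to the ``parity'' of how many of $A,B,C$ equal $U$: both $A\Delta(B\Delta C)$ and $(A\Delta B)\Delta C$ evaluate to $U$ exactly when an odd number of $A,B,C$ equal $U$, and to $\emptyset$ otherwise.

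The verification itself is then a finite check over the $2^3=8$ extreme assignments, but rather than tabulate all eight (the very ``truth table'' approach the paper disparages in the Distributive Property proof), I would argue structurally: fix the value of $A$. If $A=\emptyset$, then by part 2 of the Proposition both sides collapse to $B\Delta C$, so they agree. If $A=U$, then $A\Delta X=U\Delta X$ ``flips'' $X$ between $\emptyset$ and $U$ on extreme inputs, and one checks both sides flip $B\Delta C$ in the same way. This mirrors the hint given for the Distributive Property, where one is directed to consider the cases $A=\emptyset$ and $A=U$ separately.

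The only real subtlety, and the step I expect to be the main obstacle, is justifying that we are entitled to invoke Theorem \ref{LawOfExtremesTwo} at all: the identity must be phrased purely in the admissible operators, and $\Delta$ qualifies only because Definition \ref{SymmetricDifferenceDef} expresses it via $\setminus$ and $\cup$, which is precisely what lets the extended Law of Extremes absorb it. Once that is granted, the proof is the short case analysis on $A$ above, and no genuine computation with the internal structure of the sets is needed.
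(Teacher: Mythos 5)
Your proposal is correct and follows exactly the route the paper intends: the Proposition is placed immediately after Theorem \ref{LawOfExtremesTwo} precisely so that one verifies the identity only in the extreme cases, using $A\Delta A=\emptyset$ and $\emptyset\Delta A=A\Delta\emptyset=A$, with the case split on $A$ (or equivalently the mod-$2$ parity observation) finishing the finite check.
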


\begin{Definition}\label{ComplementDef}
Within a given universe $U$ the \textbf{complement} $A^c$ of its subset $A$ is defined as $U\setminus A$.
\end{Definition}

\begin{Proposition}
$\emptyset^c=U$ and $U^c=\emptyset$.
\end{Proposition}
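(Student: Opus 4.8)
The plan is to unfold the definition of complement (Definition \ref{ComplementDef}), namely $A^c := U \setminus A$, and then read both equalities directly off Corollary \ref{IntermediateStep}. No free set variables occur here — the only sets in play are the fixed sets $\emptyset$ and $U$ — so I do not expect to need any case analysis whatsoever.

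First I would substitute $A = \emptyset$ into the definition to obtain $\emptyset^c = U \setminus \emptyset$, and then apply Corollary \ref{IntermediateStep}(5) (with the set appearing in that identity taken to be $U$) to conclude $U \setminus \emptyset = U$, hence $\emptyset^c = U$. Next I would substitute $A = U$ to obtain $U^c = U \setminus U$, and apply Corollary \ref{IntermediateStep}(4) (again with the set taken to be $U$) to conclude $U \setminus U = \emptyset$, hence $U^c = \emptyset$.

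I do not anticipate any genuine obstacle: all the content is already packaged in Corollary \ref{IntermediateStep}, and the complement operator merely abbreviates $U \setminus (\cdot)$. If I wanted to stay strictly within the paper's uniform methodology, I could instead invoke the Law of Extremes (Theorem \ref{LawOfExtremes}): both equalities are expressed purely through the difference operator, and indeed the two statements $\emptyset^c = U$ and $U^c = \emptyset$ are precisely the two extreme cases (the complemented set being empty, respectively equal to $U$) of the single-variable expression $A^c$. So the verification is immediate either way, and the only thing worth flagging pedagogically is the pleasant observation that this proposition simply records the behavior of complementation at the two extremes.
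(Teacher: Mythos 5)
Your proof is correct and follows exactly the paper's intended method: unfold Definition \ref{ComplementDef} to rewrite $\emptyset^c$ and $U^c$ as $U\setminus\emptyset$ and $U\setminus U$, then apply items 5 and 4 of Corollary \ref{IntermediateStep}. The paper states this Proposition without proof, precisely because the derivation is this immediate consequence of the basic identities, so nothing more is needed (your closing remark about the Law of Extremes is a nice observation but does no additional work, since with no free set variables the ``extreme cases'' are the statements themselves).
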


\begin{Proposition}
$(A^c)^c=A$ for any subset $A$ of the universe $U$.
\end{Proposition}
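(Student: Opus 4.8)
The plan is to prove $(A^c)^c=A$ using the Law of Extremes (Theorem \ref{LawOfExtremes}), exactly as the preceding propositions in this section are meant to be proved. Since the complement is defined by $A^c=U\setminus A$ (Definition \ref{ComplementDef}), the equality $(A^c)^c=A$ is an equality of sets expressed purely in terms of the difference operator, namely $U\setminus(U\setminus A)=A$. Hence the Law of Extremes applies directly: I only need to verify the identity in the two extreme cases $A=\emptyset$ and $A=U$, rather than reasoning about arbitrary points.

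First I would rewrite the claim as $U\setminus(U\setminus A)=A$ to make the operators explicit, so that the hypotheses of the Law of Extremes are visibly satisfied. Then I would check the case $A=\emptyset$: here $U\setminus\emptyset=U$ by part 5 of Corollary \ref{IntermediateStep}, and then $U\setminus U=\emptyset$ by part 4 of the same corollary, so the left side equals $\emptyset=A$. Next I would check the case $A=U$: here $U\setminus U=\emptyset$ by part 4 of Corollary \ref{IntermediateStep}, and then $U\setminus\emptyset=U$ by part 5, so the left side equals $U=A$. Equivalently, one may invoke the immediately preceding proposition $\emptyset^c=U$ and $U^c=\emptyset$, which records precisely these two evaluations, and then apply complementation once more.

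Since the identity holds in both extreme cases, the Law of Extremes concludes that $(A^c)^c=A$ for every subset $A$ of $U$. The main obstacle here is not computational but conceptual: one must recognize that the complement is not a genuinely new operator but merely shorthand for a difference against the fixed universe $U$, so that the Law of Extremes is applicable without any extension. Once this is seen, the verification reduces to the two one-line evaluations above, which is precisely the pedagogical point the section is illustrating.
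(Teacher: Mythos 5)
Your proof is correct and follows exactly the route the paper intends: the paper leaves this proposition as an exercise in applying the Law of Extremes, and unfolding $A^c$ as $U\setminus A$ so that the claim becomes $U\setminus(U\setminus A)=A$, then checking the two extreme cases $A=\emptyset$ and $A=U$ via Corollary \ref{IntermediateStep}, is precisely that method (the paper's later remark that ``$U$ is fixed, so one only needs to consider $A=\emptyset$ or $A=U$'' confirms your handling of the fixed universe). No gaps; the alternative closing via the preceding proposition $\emptyset^c=U$, $U^c=\emptyset$ is equally fine.
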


\begin{Proposition}
$(A\cup B)^c=A^c\cap B^c$ and $(A\cap B)^c=A^c\cup B^c$ for any subsets $A$ and $B$ of the universe $U$.
\end{Proposition}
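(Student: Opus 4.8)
The plan is to observe that the complement is, by Definition \ref{ComplementDef}, nothing more than a difference with the fixed set $U$: $A^c = U\setminus A$. Consequently both identities $(A\cup B)^c=A^c\cap B^c$ and $(A\cap B)^c=A^c\cup B^c$ are, after unfolding the superscript $c$, equalities of sets expressed purely in terms of union, intersection, and difference. This is exactly the form to which the Law of Extremes (\ref{LawOfExtremes}) applies, so in principle nothing new is needed beyond checking the four extreme configurations of $(A,B)$.

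The quickest route, however, is to recognize these as specializations of the De Morgan Laws for the difference already established above. Setting the outer set equal to $U$ in $A\setminus(B\cup C)=(A\setminus B)\cap(A\setminus C)$ gives $U\setminus(B\cup C)=(U\setminus B)\cap(U\setminus C)$, which is precisely $(B\cup C)^c=B^c\cap C^c$; relabeling the variables yields the first identity. The same substitution $A=U$ in $A\setminus(B\cap C)=(A\setminus B)\cup(A\setminus C)$ produces $(B\cap C)^c=B^c\cup C^c$, the second identity. Thus both claims follow immediately by instantiation, with no separate casework.

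If one prefers to illustrate the Law of Extremes directly, I would run through the four cases with $A,B\in\{\emptyset,U\}$, computing each complement via the earlier Proposition that $\emptyset^c=U$ and $U^c=\emptyset$. For instance, when $A=\emptyset$ and $B=U$ the left side of the first identity is $(\emptyset\cup U)^c=U^c=\emptyset$, while the right side is $\emptyset^c\cap U^c=U\cap\emptyset=\emptyset$; the remaining three cases are analogous, and the second identity is handled identically.

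The main point requiring care---and it is minor---is simply to keep the complement honest inside the extreme-case bookkeeping: in each configuration one must remember that passing to the complement swaps $\emptyset$ and $U$, so that the Law of Extremes is being applied to the fully unfolded difference expression rather than to the opaque symbol $A^c$. Once that is observed, either route closes the proof; the substitution route is cleaner and is the one I would present.
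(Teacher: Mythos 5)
Your proposal is correct, and in fact it contains two complete proofs. The paper itself offers no written proof of this Proposition: the section is explicitly set up so that students verify such identities themselves, and the intended tool (per the section preamble and the remark about $A^c = U\Delta A$ with $U$ held fixed) is the Law of Extremes, i.e.\ exactly your fallback route of checking the four configurations $A,B\in\{\emptyset,U\}$ using $\emptyset^c=U$ and $U^c=\emptyset$. Your preferred route is genuinely different and arguably better: since the De Morgan Laws $A\setminus(B\cup C)=(A\setminus B)\cap(A\setminus C)$ and $A\setminus(B\cap C)=(A\setminus B)\cup(A\setminus C)$ were already established for \emph{all} sets, instantiating $A=U$ and unfolding Definition~\ref{ComplementDef} yields both identities with no casework at all; this buys economy and shows the complement laws are not new facts but shadows of the difference laws. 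The extreme-case route, by contrast, is the one the paper is training, and your handling of its one subtle point is right: either unfold $A^c$ as $U\setminus A$ so that Theorem~\ref{LawOfExtremes} literally applies (with $U$ fixed, not ranging over extremes), or else wait for Theorem~\ref{LawOfExtremesThree}, which licenses treating ${}^c$ as a primitive operator but appears only \emph{after} this Proposition in the paper --- so your unfolding is not merely cosmetic bookkeeping but what makes the appeal to the Law of Extremes legitimate at this point in the development.
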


\begin{Theorem}\label{LawOfExtremesThree}
Any equality of sets expressed in terms of union, interesection, difference, symmetric difference, and complements is valid if it holds in all extreme cases: each set being either empty or
 equal to the universe $U$.
\end{Theorem}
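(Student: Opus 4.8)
The plan is to reduce Theorem \ref{LawOfExtremesThree} to the already-established Theorem \ref{LawOfExtremesTwo} by eliminating the complement operator. The only feature that distinguishes the present statement from \ref{LawOfExtremesTwo} is that complements are now permitted to appear. But a complement is not a genuinely new operation: by Definition \ref{ComplementDef}, $A^c$ is merely an abbreviation for $U\setminus A$. So my first step is to take any equality $E_1=E_2$ in which $\cup,\cap,\setminus,\Delta$ and $^c$ occur, and replace every occurrence of a complement $X^c$ by $U\setminus X$. This yields a new equality $E_1'=E_2'$ in which only $\cup,\cap,\setminus,\Delta$ occur (with the universe $U$ now possibly appearing as one of the sets), and which is literally the same statement as $E_1=E_2$, since the replacement rests on the definition of $^c$ and is valid for every set.

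Next I would check that the hypotheses match up under this replacement. An extreme case assigns to each set either $\emptyset$ or $U$. By parts 4 and 5 of Corollary \ref{IntermediateStep} one has $U^c=U\setminus U=\emptyset$ and $\emptyset^c=U\setminus\emptyset=U$, so the complement of an extreme set is again an extreme set. Consequently, evaluating $E_1=E_2$ at any extreme assignment gives exactly the same result as evaluating $E_1'=E_2'$ at that assignment, the universe $U$ remaining fixed at $U$ throughout. Thus $E_1=E_2$ holds in all extreme cases if and only if $E_1'=E_2'$ does.

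Finally, since $E_1'=E_2'$ is an equality expressed purely in terms of $\cup,\cap,\setminus,\Delta$, Theorem \ref{LawOfExtremesTwo} applies directly: if it holds in all extreme cases, then it is valid for all sets. Transporting this conclusion back across the definitional identities $E_1'=E_1$ and $E_2'=E_2$ shows that $E_1=E_2$ is valid for all sets, which is precisely what the theorem asserts.

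The individual steps are routine; the one point that deserves care is the second one, namely that the extreme-case hypothesis is preserved by the substitution $X^c\mapsto U\setminus X$. I expect this to be the main (and essentially only) obstacle: one must confirm that no information is lost when complements of extreme sets collapse back to extreme sets, and in particular that the constant $U$ is never itself being tested at the value $\emptyset$ but always stays at its top value $U$. Once that bookkeeping is settled, the result follows immediately from \ref{LawOfExtremesTwo}, exactly as \ref{LawOfExtremesTwo} itself follows from \ref{LawOfExtremes} by treating $\Delta$ as an abbreviation for $(A\setminus B)\cup(B\setminus A)$.
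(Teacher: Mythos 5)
Your proposal is correct and follows exactly the route the paper intends: the paper gives no separate proof of Theorem~\ref{LawOfExtremesThree}, but its placement right after Definition~\ref{ComplementDef} and the remark that in $A^c=U\Delta A$ ``$U$ is fixed'' indicate precisely your reduction, namely expanding $X^c$ as $U\setminus X$ and invoking Theorem~\ref{LawOfExtremesTwo} while keeping the constant $U$ at its top value. Your explicit attention to the bookkeeping point that $U$ is never tested at $\emptyset$ is the one subtlety the paper itself flags, so the argument stands as written.
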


Notice that in the equality $A^c=U\Delta A$, $U$ is fixed, so to check it one only needs to consider $A=\emptyset$ or $A=U$.

\begin{Problem}\cite{VINK} 
Prove $A\setminus(A\setminus B)=A\cap B$ for any sets $A,B$.
\end{Problem}

\begin{Problem}\cite{VINK} 
Prove $A\cap(B\setminus C)=(A\cap B)\setminus (A\cap C)$ for any sets $A,B,C$.
\end{Problem}

\begin{Problem}\cite{VINK} 
Prove $A\Delta B)\cap C)=(A\cap C)\Delta (B\cap C)$ for any sets $A,B,C$.
\end{Problem}

\begin{Problem}\cite{VINK} 
Does $A\Delta B)\cup C)=(A\cup C)\Delta (B\cup C)$ hold for any sets $A,B,C$?
\end{Problem}

Notice that at this moment we can introduce students to Abelian groups. Traditionally, groups are introduced in a complicated way. However, my algebraic approach to set theory allows for discussion of Abelian groups at this stage (as an optional assignment).

\begin{Problem}
Show that $2^{SMU}=\{0,1\}$ equipped with the symmetric difference is an Abelian group. 
\end{Problem}

\section{Subsets}

Up to now the concept of subset has been treated intuitively. It is time to define it rigorously. The issue of elements of sets is avoided and we rely on equalities exclusively.

So far proofs amounted to finding creative ways to reduce (via substitution) an equality to another equality previously established. In this section students are asked to transform a known equality to another one using operations $\cap$, $\cup$, ${}^c$, etc. It is at this moment that we are using for the first time essential facts about those operators: equality $X=Y$ induces equalities $X\cap Z=Y\cap Z$, $X\cup Z=Y\cup Z$, $X^c=Y^c$. The issue of implication arises only in the algebraic contex of transforming equalities. Notice that many students understand implication as exactly a way to transform equalities. The full scope of implications will be discussed in the next section.

\begin{Definition}
We say $A$ is a \textbf{subset} of $B$ (notation: $A\subset B$ or $A\subseteq B$) if $A\cup B=B$.
\end{Definition}

\begin{Proposition}
$\emptyset \subset A$ for all sets $A$.
\end{Proposition}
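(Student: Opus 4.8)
The plan is to unfold the definition of subset and recognize the resulting equality as one already established. By the Definition just stated, the assertion $\emptyset \subset A$ means precisely that $\emptyset \cup A = A$. So the entire claim reduces to verifying a single set-theoretic identity, and no reasoning about membership is needed.

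First I would write down what must be shown: $\emptyset \cup A = A$. This is exactly part 2 of Corollary \ref{IntermediateStep}, where it was recorded that $\emptyset \cup A = A \cup \emptyset = A$ holds for every subset $A$ of a universe $U$. Thus the proof can be a one-line appeal to that Corollary. Alternatively, for students who prefer to see the mechanism rather than cite a prior result, I would verify $\emptyset \cup A = A$ directly by the Law of Extremes (Theorem \ref{LawOfExtremes}): it suffices to check the two extreme cases $A = \emptyset$ and $A = U$. When $A = \emptyset$ both sides are $\emptyset$; when $A = U$ both sides equal $U$, using Axioms \ref{BasicAxioms} read in $SMU$. Since the identity holds in all extreme cases, it is valid in every universe.

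I expect no genuine obstacle here; the only conceptual point worth flagging is the translation step, namely that a statement about the subset relation $\subset$ is, under the chosen definition, literally a statement about equality of unions, and so falls immediately within the scope of the identity-generating machinery already developed. The value of the Proposition lies less in its difficulty than in modeling this move for the reader.
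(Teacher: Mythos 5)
Your proof is correct and follows exactly the route the paper intends: unfold the definition $A\subset B \iff A\cup B=B$ so that $\emptyset\subset A$ becomes the identity $\emptyset\cup A=A$, which is part 2 of Corollary \ref{IntermediateStep} (or a two-case check via the Law of Extremes). Nothing is missing; the observation that the subset claim reduces purely to an already-established equality is precisely the point of this Proposition.
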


\begin{Proposition}
$A\cap B\subset A$ for all sets $A,B$.
\end{Proposition}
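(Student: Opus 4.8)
The plan is to reduce the subset claim to a single set equality and then invoke the Law of Extremes. By the definition of subset, $A\cap B\subset A$ means precisely that $(A\cap B)\cup A=A$. So the task becomes verifying this one identity, which is expressed entirely in terms of $\cap$ and $\cup$; Theorem \ref{LawOfExtremes} then tells us it suffices to check it in the extreme cases where each of $A$ and $B$ is either $\emptyset$ or equal to the universe $U$.

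First I would write the subset definition down carefully, since the only real pitfall is placing the correct set on the right-hand side: here the larger set $A$ must appear as the result of the union, giving $(A\cap B)\cup A=A$ rather than the reverse. Getting this translation right is the one place where a student is likely to stumble, so I would emphasize it.

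Next I would run through the four extreme cases, drawing on the basic identities already established in Corollary \ref{IntermediateStep} (in particular $\emptyset\cap X=\emptyset$, $\emptyset\cup X=X$, $U\cap X=X$, and $X\cup U=U$). Each case collapses at once: whenever $A=\emptyset$ both sides reduce to $\emptyset$, and whenever $A=U$ both sides reduce to $U$, with the value of $B$ playing no role. Since the identity holds in every extreme case, the Law of Extremes certifies it for all sets, and therefore $A\cap B\subset A$.

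I expect no genuine obstacle in this argument; the only care required is the correct unfolding of the subset definition into the equality $(A\cap B)\cup A=A$. As an aside worth pointing out to students, the computation shows that $B$ is irrelevant to the outcome, which is exactly the absorption phenomenon $A\cup(A\cap B)=A$ underlying this inclusion.
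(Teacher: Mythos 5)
Your proof is correct and follows exactly the route the paper intends: unfold the definition $A\subset B \iff A\cup B=B$ into the identity $(A\cap B)\cup A=A$, then certify that identity via the Law of Extremes (Theorem \ref{LawOfExtremes}) together with the basic identities of Corollary \ref{IntermediateStep}. The paper leaves this proposition as an exercise in precisely this method, and your handling of the one delicate point---putting $A$, not $A\cap B$, on the right-hand side of the union---is accurate.
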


\begin{Proposition} Suppose $A,B$ are subsets of a universe $U$. \\
a. Derive $A\cap B=A$ from $A\cup B=B$.\\
b. Derive $A\cup B=A$ from $A\cap B=A$.\\
c. Conclude $A\subset B$ may be defined as $A\cap B=A$.
\end{Proposition}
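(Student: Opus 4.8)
The plan is to read parts (a) and (b) as the two implications that together show the hypotheses $A\cup B=B$ and $A\cap B=A$ are equivalent, so that (c) follows at once; in part (b) I read the intended conclusion as $A\cup B=B$ rather than the printed $A\cup B=A$, since the latter already fails for $A=\emptyset$, $B=U$ and would not yield the equivalence that (c) requires. The crucial methodological point is that, unlike the earlier propositions, these are \emph{conditional} statements rather than identities, so the Law of Extremes (Theorem \ref{LawOfExtremes}) cannot be applied to the target equalities directly. Instead I would transform the hypothesis equality into the conclusion by applying $\cap$ or $\cup$ to both sides, exactly the mode of argument announced at the start of this section.

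Before starting I would record the two absorption identities $A\cap(A\cup B)=A$ and $A\cup(A\cap B)=A$. These are unconditional equalities in $\cup$ and $\cap$, so each is justified in one line by the Law of Extremes, or from the Distributive Property together with Corollary \ref{IntermediateStep}: for instance $A\cap(A\cup B)=(A\cup\emptyset)\cap(A\cup B)=A\cup(\emptyset\cap B)=A\cup\emptyset=A$.

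For (a) I would start from the hypothesis $A\cup B=B$ and intersect both sides with $A$, obtaining $A\cap(A\cup B)=A\cap B$. The left-hand side collapses to $A$ by absorption, giving $A=A\cap B$, which is the desired $A\cap B=A$. For (b) I would start from $A\cap B=A$ and form the union of both sides with $B$, obtaining $(A\cap B)\cup B=A\cup B$. Using commutativity and the second absorption identity, the left-hand side equals $B\cup(B\cap A)=B$, so $B=A\cup B$, i.e. $A\cup B=B$. Finally (c) is just the combination: (a) and (b) show that $A\cup B=B$ holds exactly when $A\cap B=A$ holds, and since $A\subset B$ was defined by $A\cup B=B$, the condition $A\cap B=A$ defines the very same relation.

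The main obstacle I anticipate is conceptual rather than computational: resisting the temptation to verify the target equalities by the Law of Extremes, and instead recognizing that each part is an implication to be proved by operating on the hypothesis. The only genuine lemma needed is absorption, so the secondary point to get right is to supply and invoke that identity cleanly; after that, parts (a) and (b) are each a two-line transformation and (c) is immediate.
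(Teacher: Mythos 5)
Your proof is correct and uses exactly the method the paper prescribes for this section (the paper itself states the proposition without proof, as an exercise): transform the hypothesis equality by applying $\cap$ or $\cup$ to both sides, and collapse the result with absorption identities that are themselves one-line consequences of the Law of Extremes. Your emendation of part (b) --- reading the conclusion as $A\cup B=B$ rather than the printed $A\cup B=A$, which fails for $A=\emptyset$, $B=U$ --- is also the right reading, since it is precisely the equivalence that part (c) requires.
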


\begin{Proposition}
If $A\subset B$ and $B\subset C$, then $A\subset C$.
\end{Proposition}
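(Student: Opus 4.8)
The plan is to translate everything through the definition $A\subset B\iff A\cup B=B$ and then chain substitutions, in the spirit of transforming one equality into another that this section advertises. Note first that transitivity is a conditional statement \emph{about} equalities, not a single set identity, so the Law of Extremes (Theorem \ref{LawOfExtremes}) does not apply to it directly; the argument must be genuinely algebraic. Under the definition the two hypotheses become $A\cup B=B$ and $B\cup C=C$, and the goal is the single equality $A\cup C=C$.

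First I would start from the target expression $A\cup C$ and rewrite $C$ using the second hypothesis $B\cup C=C$, obtaining $A\cup C=A\cup(B\cup C)$. Next I would reassociate to $(A\cup B)\cup C$, then invoke the first hypothesis $A\cup B=B$ to replace $A\cup B$ by $B$, reaching $B\cup C$, and finally collapse $B\cup C$ back to $C$ by the second hypothesis again. The chain then reads $A\cup C=C$, which is exactly $A\subset C$. Equivalently, and perhaps cleaner for students, I could use the substitution principle emphasized in this section---that $X=Y$ induces $X\cup Z=Y\cup Z$---by unioning both sides of $A\cup B=B$ with $C$ to get $(A\cup B)\cup C=B\cup C$, and then finishing with associativity on the left and $B\cup C=C$ on the right.

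The only nonroutine ingredient is associativity of the union, $A\cup(B\cup C)=(A\cup B)\cup C$, which is not among the propositions proved above. I expect this to be the main (indeed the only) obstacle, and I would dispatch it first as a short preliminary lemma via the Law of Extremes: since associativity \emph{is} a bona fide set identity, one simply verifies it in the extreme cases where each of $A,B,C$ is either $\emptyset$ or $U$, which is immediate. With associativity in hand, the transitivity argument reduces to a pure substitution chain requiring no case analysis at all.
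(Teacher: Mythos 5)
Your proof is correct and follows exactly the method this section prescribes: rewrite the hypotheses as $A\cup B=B$ and $B\cup C=C$, then chain substitutions $A\cup C=A\cup(B\cup C)=(A\cup B)\cup C=B\cup C=C$. The paper leaves this proposition without a written proof, and your two supporting observations --- that the Law of Extremes as stated applies only to unconditional identities, not to a conditional statement like transitivity, and that associativity of $\cup$ (nowhere listed among the paper's propositions) must first be secured, which the Law of Extremes does legitimately --- are both sound and patch the only gaps in the intended argument.
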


\begin{Proposition}
If $A\subset B$, then $A\cap C\subset B\cap C$ for all sets $A,B,C$.
\end{Proposition}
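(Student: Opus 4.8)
The plan is to work entirely at the level of equalities, as the preceding Subsets section prescribes, transforming the defining equality of the hypothesis into the defining equality of the conclusion. By definition, $A\subset B$ means $A\cup B=B$, and the goal $A\cap C\subset B\cap C$ unwinds to the equality $(A\cap C)\cup (B\cap C)=B\cap C$. So the task is really to derive one set-theoretic equality from another using the algebra of operators already in hand, not to invoke the Law of Extremes (which governs identities, whereas here we are proving an implication between equalities).

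First I would start from the hypothesis $A\cup B=B$ and intersect both sides with $C$, using the principle stated at the opening of this section that an equality $X=Y$ induces $X\cap Z=Y\cap Z$. This yields
\[
(A\cup B)\cap C=B\cap C.
\]
Next I would rewrite the left-hand side using the Distributive Property for the Intersection (together with commutativity of the intersection), which gives $(A\cup B)\cap C=(A\cap C)\cup (B\cap C)$. Substituting this into the previous display produces exactly
\[
(A\cap C)\cup (B\cap C)=B\cap C,
\]
which is the defining equality for $A\cap C\subset B\cap C$. That completes the derivation.

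I expect the only point requiring care to be the bookkeeping in the distributive step: the distributive law was stated as $A\cap(B\cup C)=(A\cap B)\cup(A\cap C)$, so one must first commute $(A\cup B)\cap C$ to $C\cap(A\cup B)$ before expanding, and then commute back. This is routine given the commutativity proposition, so there is no genuine obstacle; the essential move is recognizing that intersecting the hypothesis with $C$ and distributing lands directly on the conclusion. As an alternative worth noting, one could instead use the characterization $A\subset B\iff A\cap B=A$ proved earlier: intersecting $A\cap B=A$ with $C$ and simplifying $(A\cap B)\cap C$ to $(A\cap C)\cap(B\cap C)$ via associativity, commutativity, and idempotence of $\cap$ (from Corollary \ref{IntermediateStep}) yields $(A\cap C)\cap(B\cap C)=A\cap C$, again the desired conclusion.
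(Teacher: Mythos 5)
Your proof is correct and follows exactly the method the paper's Subsets section prescribes: start from the defining equality $A\cup B=B$, intersect both sides with $C$ (using that an equality $X=Y$ induces $X\cap Z=Y\cap Z$), and distribute to land on the defining equality $(A\cap C)\cup(B\cap C)=B\cap C$. The bookkeeping with commutativity and the alternative via $A\cap B=A$ are both fine, so nothing is missing.
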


\begin{Proposition}
If $A\subset B$, then $B^c\subset A^c$.
\end{Proposition}

As can be seen now, our discussion of a universe $U$ was really about the set of all subsets of $U$.
\begin{Definition}
Given a universe $U$, $2^U$ is the set of all subsets of $U$. It is a natural \textbf{expansion} of our universe $U$. 
\end{Definition}

Notice $SMU=2^{\emptyset}=\{0\}$. Discussing empty set requires at least $SMU$. \lq If a tree falls in a forest and no one is around to hear it, does it make a sound?\rq\ has an answer in set theory.

Another observation is that $U$ can be considered as a subset of $2^U$, namely the set of all minimal sub-universes of $U$.

\section{Logic}

So far the concern has been only with one issue: whether equality of two sets holds or not (with the emphasis on developing a calculus of equalities that always hold). Our basic example of an equality that does not hold is $SMU=\emptyset$. To consider inequalities of sets basic concepts of logic need to be developed.

The basic issue is to recognize whether a given element $x$ belongs to a given set $A$ or not. If it does, it can be expressed by saying $x\in A$ is \textbf{true}. If $x$ does not belong to $A$, then we declare $x\in A$ to be \textbf{false}. This section deals with statements that can be classified as true or false. Our basic statement in this category is $x\in A$, as most statements in undergraduate mathematics are of that form.

Notice \lq $A\cap B=B\cap A$ for all sets $A,B$\rq \ is of a different nature: it has to do with the fact that there is no set of all sets. Thus, it is not built from basic statements of the type $x\in A$. On the other hand, \lq $A\cap B=B\cap A$ for all subsets $A,B$ of a given universe $U$\rq\ is built from basic statements. To see it, one needs to go to the higher universe $2^U$ (the set of all subsets of $U$). Now, consider the set $S$ of all elements $X$ of $2^U$ such that $X\cap Y=Y\cap X$ for all elements $Y\in 2^U$. The statement \lq $A\cap B=B\cap A$ for all subsets $A,B$ of a given universe $U$\rq\ means the set $S$ equals the whole $2^U$.

Typically, Russell's Paradox (see \ref{RussellParadox} below) is formulated as "there is no set of all sets". I will give a different formulation.

\begin{Theorem}[Russell's Paradox]\label{RussellParadox}
A universe $U$ such that $2^U$ is a subset of $U$ leads to breakdown of logic.
\end{Theorem}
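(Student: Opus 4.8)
The plan is to reconstruct the classical Russell construction inside the framework developed in the paper, where the only atomic statements are of the form $x\in A$ (an element $x$ of the universe belonging to a subset $A$). The crucial feature to exploit is that ordinarily the elements of $U$ and the subsets of $U$ (the members of $2^U$) are objects of two different kinds, so a statement such as $x\in x$ is simply not well-formed. The hypothesis that $2^U$ is a subset of $U$ collapses this distinction: every subset of $U$ is now also an element of $U$. First I would make this explicit, noting that for any $A\in 2^U$ we now also have $A\in U$, so the atomic statement $A\in A$ is legitimately of the permitted form \lq\lq element $\in$ set\rq\rq\ and must therefore be either true or false.

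Next I would form the Russell subset. Define $R$ to be the set of all $x\in 2^U$ for which $x\notin x$ is true; since each such $x$ lies in $U$ by the hypothesis, $R$ is a genuine subset of $U$, so $R\in 2^U$, and hence $R\in U$ again by the hypothesis. The point of this step is that $R$ is itself one of the objects over which its own defining condition ranges, so \lq\lq$R\in R$\rq\rq\ is a well-formed atomic statement to which the true/false dichotomy must apply.

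The decisive step is the case analysis on the truth value of $R\in R$. If $R\in R$ is true, then $R$ meets the membership condition defining $R$, which says precisely $R\notin R$; so $R\in R$ is false. Conversely, if $R\in R$ is false, then, using that $R\in 2^U$, the element $R$ satisfies the defining condition $x\notin x$, and so $R\in R$ is true. Thus $R\in R$ is true exactly when it is false: the atomic statement $R\in R$ cannot be assigned either truth value consistently. Since the entire calculus of statements rests on every atomic statement $x\in A$ being cleanly true or false, the existence of such an undecidable atom is exactly the \lq\lq breakdown of logic\rq\rq\ that the theorem asserts, which completes the argument.

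I expect the main obstacle to be conceptual rather than computational: pinning down what \lq\lq breakdown of logic\rq\rq\ should mean in this setting, and arguing convincingly that the hypothesis $2^U\subset U$ is precisely the ingredient that makes self-membership statements well-formed in the first place. One must also take care that $R$ genuinely qualifies as an admissible argument for its own membership test, namely that it lies in $2^U$ and, via the hypothesis, in $U$; if that step is glossed over, the paradox dissolves, since without legitimate self-reference there is no contradiction.
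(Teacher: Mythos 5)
Your proposal is correct and takes essentially the same route as the paper: both form the Russell set of non-self-membered objects, use the hypothesis $2^U\subset U$ to conclude that this set is itself an element of $U$, and derive the contradiction $R\in R \iff R\notin R$, interpreting the impossibility of assigning a truth value to this atomic statement as the breakdown of logic. The only difference is cosmetic but in your favor: you let the comprehension range over $x\in 2^U$ rather than $x\in U$ (as the paper does), which guarantees that every statement $x\in x$ appearing in the argument is well-formed.
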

\begin{proof}
If $2^U$ is a subset of $U$, then we may consider $A=\{x\in U | x\notin x\}$: the set of all elements of $U$ not containing itself. xxx more details here. Since $A\in 2^U\subset U$, the question is
whether $A\in A$ or not. So put $x=A$ and test $x\notin x$. If $x\in A$, it means
$x\notin x$, i.e. $x\notin A$. If $x\notin A$, it means $x\notin x$, i.e. $x\in A$.

That means any universe $U$ for which $2^U\subset U$ leads to breakdown of logic: we cannot assign the labels true, false to basic statements under this scenario.
\end{proof}

\begin{Remark}
The proof of \ref{RussellParadox} points to the importance of different elements of sets not to interact with each other (see the Justification of the Identity Generating Machine). If $2^U$ is a subset of $U$ then there is a possibility of self-interaction $x\in x$ for elements of $U$.\\
Another way to interpret Russell's Paradox is to say that the process of expanding universes never ends.
\end{Remark}

\begin{Exercise}
Show that for any universe $U$ there is $A\subset U$ such that $A\notin U$.
\end{Exercise}

Russell's Paradox (Theorem \ref{RussellParadox}) points to the fact one has to proceed gingerly in discussing logic. G\" odel's theorem is further evidence: there are statements in arithmetics whose validity cannot be determined for any given finite set of axioms.

Despite those difficulties, we will try to develop a calculus of statements with statements related to set theory (statements of the type $x\in A$) being our basic model.

From now on only with statements that can be deemed either true or false will be dealt with.

\subsection{Alternative}

What is the meaning of $x\in A\cup B$? It means $x$ belongs to $A$ \textbf{or} $x$ belongs to $B$. Thus, $x\in A\cup B$ is false exactly when both $x\in A$ and $x\in B$ are false. That leads to the \textbf{logical alternative operator} $\lor$: given any two statements $p,q$ a new statement $p\lor q$ can be formed. The only time $p\lor q$ can be false is if both $p$ and $q$ are false.

\subsection{Conjunction}

What is the meaning of $x\in A\cap B$? It means $x$ belongs to $A$ \textbf{and} $x$ belongs to $B$. Thus, $x\in A\cap B$ is true exactly when both $x\in A$ and $x\in B$ are true. That leads to the \textbf{logical conjunction operator} $\land$: given any two statements $p,q$ a new statement $p\land q$ can be formed. The only time $p\land q$ can be true is if both $p$ and $q$ are true.

\subsection{Negation}

What is the meaning of $x\notin A$? It means $x$ does \textbf{not} belong to $A$. Thus, $x\notin A$ is true exactly when $x\in A$ is false. That leads to the \textbf{logical negation operator} $\lnot$: given any statement $p$ a new statement $\lnot p$ can be formed. The only time $\lnot p$ can be true is if $p$ is false.

\subsection{Implication (the conditional in \cite{Dev})}

What is the meaning of $A\subset B$? It means $x$ belongs to $B$ \textbf{if} $x$ belongs to $A$. Thus, $A\subset B$ is false exactly when there is $x$ such that $x\in A$ is true and $x\in B$ is false. That leads to the \textbf{logical implication operator} $\implies$: given any two statements $p,q$ a new statement $p\implies q$ can be formed. The only time $p\implies q$ can be false is if $p$ is true and $q$ is false.

Implication causes the most misunderstanding among students. And here is where interpreting it in terms of set theory helps. The most confusing fact is that anything can be implied by falsehood: the meaning of it in set theory is $\emptyset\subset A$ for all sets $A$, which is quite obvious from our definition of being a subset.

\subsection{Equivalence (the biconditional in \cite{Dev})}

What is the meaning of $A=B$? It means $x$ belongs to $B$ \textbf{if and only if} $x$ belongs to $A$. Thus, $A=B$ is false exactly when there is $x$ such that logical values of $x\in A$ and $x\in B$ are different. That leads to the \textbf{logical equivalence operator} $\equiv$: given any two sentences $p,q$ a new statement $p\equiv q$ can be formed. The only time $p\equiv q$ can be false is if $p$ and $q$ have different logical values.

\subsection{Tautologies}

Suppose we see a composite statement $\phi$ out of statements $p$, $q$, $r$, etc. If the new statement $\phi$ is always true regardless of logical values of components $p$, $q$, $r$, $\ldots$, then $\phi$ is called a \textbf{tautology}. The most known tautology is $p\lor \lnot p$ (to be or not to be, in the words of Hamlet).

\section{Interaction between set theory and logic}
This section is devoted to the complete analogy between basic set theory and logic. We are only interested in set-theoretic identities and logical tautologies.

\subsection{From sets to logic}
Consider any set-theoretic expression formed using only operators $\cup$, $\cap$, and $\setminus$ ($(A\cup B)\cap C^c$, for example). What does it mean $x\in (A\cup B)\cap C^c$? Let's see: it means ($x\in A$ or $x\in B$) and $x\notin C$.

Now one sees
\begin{Theorem}
[Recipe for Converting Set-theoretic Equalities]
Given a set-theoretic equality do the following:\\
a. Replace $\cup$ by $\lor$, $\cap$ by $\land$, and ${}^c$ by $\lnot$,\\
b. Replace each set $Y$ by the statement \lq $x\in Y$\rq\ with $x$ being the same in all replacements.

If one arrives at a tautology, then the original equality is an identity (valid for all sets appearing there).
\end{Theorem}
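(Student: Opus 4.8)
The plan is to reduce the tautology hypothesis to the extreme-case hypothesis of the Law of Extremes (Theorem \ref{LawOfExtremesThree}) through a dictionary between truth-value assignments and extreme sets. Fix an element $x$ of the nonempty universe $U$. Write the given equality as $E_1=E_2$, where $E_1,E_2$ are built from sets $A_1,\dots,A_n$ using $\cup$, $\cap$, and ${}^c$, and let $\phi_1,\phi_2$ be their recipe-translations: the logical expressions in the variables $p_i$ standing for \lq $x\in A_i$\rq\ obtained from the substitutions in (a). Interpreting the equals sign as the biconditional $\equiv$, the translated statement is $\phi_1\equiv\phi_2$, and \lq arriving at a tautology\rq\ means $\phi_1$ and $\phi_2$ take the same truth value under every assignment of truth values to $p_1,\dots,p_n$.

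The dictionary is the following. Assigning each $A_i$ to one of the two extreme sets corresponds to a truth-value assignment by letting $A_i=U$ mean that $p_i$ is true and $A_i=\emptyset$ mean that $p_i$ is false; this is legitimate because $x\in U$ always holds while $x\in\emptyset$ never does. The correspondence is a bijection, so ranging over all extreme cases is the same as ranging over all truth-value assignments.

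The heart of the argument is a bridge lemma, proved by induction on the structure of a set expression $E(A_1,\dots,A_n)$ with translation $\phi$: whenever each $A_i$ is extreme, the value $E$ is again extreme, and the statement \lq $x\in E$\rq\ has exactly the truth value of $\phi$ at the corresponding assignment. The base case $E=A_i$ is immediate. For the inductive steps one combines the meaning of each operator with the extreme-case arithmetic of Corollary \ref{IntermediateStep}: $x\in E'\cup E''$ means $(x\in E')\lor(x\in E'')$ while $\{\emptyset,U\}$ is closed under $\cup$ (since $A\cup U=U$), matching $\cup\mapsto\lor$; the case $\cap\mapsto\land$ is analogous using $A\cap U=A$ and $A\cap\emptyset=\emptyset$; and $(E')^c=U\setminus E'$ sends $\emptyset$ to $U$ and $U$ to $\emptyset$, matching ${}^c\mapsto\lnot$. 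Thus \lq $x\in E$\rq\ tracks $\phi$ across every operator.

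Finally I would assemble the pieces. In a fixed extreme case the equality $E_1=E_2$ holds iff $E_1$ and $E_2$ are the same extreme set, which by the bridge lemma happens iff the statements \lq $x\in E_1$\rq\ and \lq $x\in E_2$\rq, namely $\phi_1$ and $\phi_2$, share a truth value at the corresponding assignment. Hence $\phi_1\equiv\phi_2$ being a tautology forces $E_1=E_2$ to hold in every extreme case, whereupon the Law of Extremes (Theorem \ref{LawOfExtremesThree}) upgrades this to a genuine identity. I expect the bridge lemma to be the main obstacle: one must set up the structural induction carefully and check closure of $\{\emptyset,U\}$ under each operator, after which the remainder is bookkeeping through the dictionary.
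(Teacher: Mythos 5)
Your proof is correct, but it takes a genuinely different route from the paper's. The paper's own justification is the one-line observation preceding the theorem, which amounts to a direct, pointwise argument: for arbitrary sets $A_1,\dots,A_n$ and an arbitrary element $x$, membership unfolds compositionally ($x\in E'\cup E''$ means $x\in E'$ or $x\in E''$, and so on), so \lq $x\in E_1$\rq\ is exactly $\phi_1$ evaluated at the actual truth values of the statements \lq $x\in A_i$\rq , and likewise for $E_2$; a tautology then forces \lq $x\in E_1$\rq\ and \lq $x\in E_2$\rq\ to agree for every $x$, whence $E_1=E_2$ because sets with the same elements are equal. You instead instantiate the tautology only at the $2^n$ extreme assignments via the dictionary $A_i=U$ corresponding to $p_i$ true, prove a bridge lemma by structural induction showing that extreme inputs produce extreme outputs tracking the formula, and then invoke Theorem \ref{LawOfExtremesThree} to upgrade validity in all extreme cases to a genuine identity. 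What your route buys: it stays inside the paper's own machinery (the Law of Extremes does the heavy lifting here exactly as it does for the purely set-theoretic identities), and it makes explicit that checking a tautology by truth tables is literally the same computation as checking an equality in extreme cases. What the paper's route buys: it is shorter, needs no structural induction and no appeal to the Law of Extremes, and it explains the soundness of the recipe at the level of individual elements rather than whole configurations of sets. One small remark: your fixed element $x$ is dispensable --- the bridge lemma can be phrased purely as closure of $\{\emptyset,U\}$ under the operations, with $U$ playing the role of \lq true\rq\ (using the arithmetic of Corollary \ref{IntermediateStep}), which removes the (harmless, since the paper assumes $U$ nonempty) dependence on choosing $x$.
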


How does one check if the logical expression is a tautology? By deriving it from other tautologies or by applying the truth tables in the worst case. 
Now one arrives at
\begin{Theorem}
[Simplified Recipe for Converting Set-theoretic Equalities]
Given a set-theoretic equality do the following:\\
a. Replace $\cup$ by $\lor$, $\cap$ by $\land$, and ${}^c$ by $\lnot$,\\
b. Replace each set $Y$ by the statement \lq $Y$ is not empty\rq\ which is simplified to $\bar Y$.

If one arrives at a tautology, then the original equality is an identity (valid for all sets appearing there).
\end{Theorem}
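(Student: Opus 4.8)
The plan is to reduce the statement to the Law of Extremes (\ref{LawOfExtremesThree}), exploiting the fact that the two-element family $\{\emptyset, U\}$ behaves exactly like the two logical truth values under the set operations, with the predicate ``$Y$ is not empty'' playing the role of the truth value of $Y$.

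First I would record the basic closure fact: if each of $A$ and $B$ ranges over $\{\emptyset, U\}$, then $A\cup B$, $A\cap B$, and $A^c$ again lie in $\{\emptyset, U\}$; this is immediate from Corollary \ref{IntermediateStep} together with $\emptyset^c=U$ and $U^c=\emptyset$. Thus $\{\emptyset, U\}$ is closed under the three operators appearing in the recipe. Next I would check how the atom $\bar Y$ = ``$Y$ is not empty'' transforms under each operator, but only for arguments drawn from $\{\emptyset, U\}$. On this restricted domain one has: $A\cup B$ is non-empty iff $A$ is non-empty or $B$ is non-empty; $A\cap B$ is non-empty iff $A$ and $B$ are both non-empty (here one crucially uses that both are extreme, since in general $\{1\}\cap\{2\}=\emptyset$ while both factors are non-empty); and $A^c$ is non-empty iff $A$ is empty (using $U\neq\emptyset$). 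In other words, on $\{\emptyset, U\}$ the non-emptiness predicate carries $\cup,\cap,{}^c$ to $\lor,\land,\lnot$ precisely as prescribed in step (a).

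The heart of the argument is then a compositional bookkeeping step. Given a set expression $E(Y_1,\dots,Y_n)$ and an extreme assignment $Y_i\in\{\emptyset,U\}$, I would argue by induction on the structure of $E$ that $E$ evaluates to a member of $\{\emptyset,U\}$ and that ``$E$ is not empty'' holds iff the logical formula $\phi(\bar Y_1,\dots,\bar Y_n)$ produced in step (a) is true under the truth assignment $\bar Y_i := $ ``($Y_i\neq\emptyset$)''. As the extreme assignment ranges over all $2^n$ choices, this induced truth assignment ranges over all $2^n$ assignments of truth values to the atoms.

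Finally I would assemble the pieces. Suppose the recipe produces a tautology $\phi_1\equiv\phi_2$ from the equality $E_1=E_2$. Fix any extreme case. Both $E_1$ and $E_2$ then lie in $\{\emptyset,U\}$, and since these two sets are distinguished solely by emptiness (as $U\neq\emptyset$), we have $E_1=E_2$ iff $E_1$ and $E_2$ agree on being non-empty. By the bookkeeping step this agreement is exactly the statement that $\phi_1$ and $\phi_2$ receive the same truth value under the corresponding assignment, which holds because $\phi_1\equiv\phi_2$ is a tautology. Hence $E_1=E_2$ in every extreme case, and the Law of Extremes (\ref{LawOfExtremesThree}) upgrades this to the identity $E_1=E_2$ for all sets. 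The step I expect to be the main obstacle is the non-emptiness computation for $\cap$: it is valid only because the arguments are extreme, and pinning down exactly why the restriction to $\{\emptyset,U\}$ rescues an otherwise false pointwise claim is the crux that makes the \emph{simplified} recipe work while keeping it genuinely distinct from the pointwise recipe stated just above.
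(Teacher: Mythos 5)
Your proof is correct, but it takes a genuinely different route from the paper's. The paper gives no separate argument for this theorem: it is offered as an immediate restatement of the preceding Recipe for Converting Set-theoretic Equalities, the implicit point (signalled by the remark about checking tautologies via truth tables) being that whether the formula produced in step (a) is a tautology depends only on its propositional form, not on what the atoms stand for, so renaming the atom \lq $x\in Y$\rq\ to $\bar Y$ changes nothing and the pointwise membership semantics already discussed carries the conclusion. You instead bypass the pointwise recipe entirely and reduce everything to the Law of Extremes \ref{LawOfExtremesThree}: the family $\{\emptyset,U\}$ is closed under $\cup$, $\cap$, ${}^c$; on this two-element family (and only there) the non-emptiness predicate is compositional, sending $\cup,\cap,{}^c$ to $\lor,\land,\lnot$; a structural induction transfers this to arbitrary expressions; and the tautology hypothesis then settles each extreme case, after which \ref{LawOfExtremesThree} finishes the job. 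Your route buys a genuine insight the paper leaves implicit: it isolates exactly why interpreting $\bar Y$ as \lq $Y$ is not empty\rq\ is legitimate even though non-emptiness fails to be compositional for $\cap$ on arbitrary sets (your $\{1\}\cap\{2\}$ example), the restriction to extreme sets being precisely what repairs this; it also ties the theorem back to the Law of Extremes, the paper's central tool, and anticipates the later subsection \lq From logic to sets\rq , where truth values are modeled by $\emptyset$ and $SMU$. What the paper's route buys is brevity: the simplified recipe imposes literally the same formal test as the pointwise one. One minor remark: your observation that the induced truth assignments exhaust all $2^n$ possibilities is unnecessary for the direction being proved (tautology implies identity); it would be needed only for the unstated converse.
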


 At this moment one can add inclusion $A\subset B$ and equality $A=B$ of sets by noticing they translate to $\bar A\implies \bar B$ and $\bar A\equiv \bar B$, respectively.

\subsection{From logic to sets}

What about the other direction? It turns out basic logic can be interpreted in terms of subsets of $SMU$. Given a statement $p$, there are only two issues: is $p$ true or is it false. Given a subset $A$ of $SMU$, there are only two issues: is $A$ empty or is it equal $SMU$. Thus one may transfer each statement $p$ to a subset $\bar p$ of $SMU$ according to the following rule: $\bar p=\emptyset$ if $p$ is false, and $\bar p=SMU$ if $p$ is true.

From that point of view $p\lor q$ translates to $\bar p\cup \bar q$, $p\land q$ translates to $\bar p\cap \bar q$, and $\lnot p$ translates to $\bar p^c$. Moreover, $p\equiv q$ translates to $\bar p=\bar q$ and $p\implies q$ translates to $\bar p\subset \bar q$.

Since $\bar p\cup \bar p^c=SMU$ holds, it means $p\lor \lnot p$ is always true (it is a tautology).

Suppose one wants to use set-theoretic identities to verify tautologies. One needs to figure out how to transform logical identities into set-theoretical identities. The only remaining operators to consider are equivalence $\equiv$ and implication $\implies$. Notice $p\implies q$ is equivalent to $\lnot p\lor q$. This means, whenever we see implication $p\implies q$, one should transform it to $\bar p^c\cup \bar q$.

\begin{Example}
Suppose we want to show $(p\implies q)\equiv (\lnot q\implies \lnot p)$.
\end{Example}
In set theory it means $\bar p^c\cup \bar q=(\bar q^c)^c\cup \bar p^c$, which is indeed true.

\begin{Problem}
Suppose $p$ and $q$ are statements. Prove that 
$(p\land q)\implies p$ is a tautology.
\end{Problem}

\begin{Problem}
Suppose $p$ and $q$ are statements. Prove that 
$p \implies (p\lor q)$ is a tautology.
\end{Problem}

\begin{Problem}
Suppose $p$ is a statement. Prove that $(\lnot p \implies  p) \implies p$ is a tautology .
\end{Problem}

\begin{Problem}
Suppose $p$ and $q$ are statements. Prove that $(\lnot p \implies  q\land\lnot q) \implies p$ is a tautology.
\end{Problem}

\begin{Problem}
Suppose $p$ and $q$ are statements. Prove that $(q\land (\lnot p \implies  \lnot q)) \implies p$ is a tautology.
\end{Problem}

\begin{Problem}
Suppose $p$ and $q$ are statements. Prove that $p\equiv q$ if and only if $\lnot p\equiv\lnot q$.
\end{Problem}

\begin{Problem}
Suppose $p$ and $q$ are statements. Prove that $\lnot(p\lor q)$ if and only if $\lnot p\land\lnot q$.
\end{Problem}

\begin{Problem}
Suppose $p$ and $q$ are statements. Prove that $\lnot(p\land q)$ if and only if $\lnot p\lor\lnot q$.
\end{Problem}

\begin{Problem}
Suppose $p$ and $q$ are statements. Prove that $p\Rightarrow q$ if and only if $\lnot(p\land\lnot q)$.
\end{Problem}

\begin{Problem}
Suppose $p$ and $q$ are statements. Prove that $p\Rightarrow q$ if and only if $\lnot q\Rightarrow \lnot p$.
\end{Problem}

\begin{Problem}
Suppose $p$, $q$, and $r$ are statements. Prove that $p\Rightarrow (q\lor r)$ if and only if $(p\land\lnot q)\Rightarrow r$.
\end{Problem}

\begin{Problem}
Suppose $p$, $q$, and $r$ are statements. Prove that $p\Rightarrow (q\land r)$ if and only if $(p\Rightarrow r)\land (p\Rightarrow q)$.
\end{Problem}

\begin{Problem}
Suppose $p$, $q$, and $r$ are statements. Prove that $(p\lor q) \Rightarrow r$ if and only if $(p \Rightarrow r)\land (q \Rightarrow r)$.
\end{Problem}

\begin{Problem}
Suppose $p$, $q$, and $r$ are statements. Prove that 
$(p\lor q)\land r\equiv (p\land r)\lor (q\land r)$.
\end{Problem}

\begin{Problem}
Suppose $p$, $q$, and $r$ are statements. Prove that 
$(p\land q)\lor r\equiv (p\lor r)\land (q\lor r)$.
\end{Problem}

\section{Transformations of sets}
It is customary to consider transformations of sets to be a more advanced concept and to start with the concept of a function. In my view transformations are intrinsically connected to the concept of a set. Also, the material below allows students to use their algebraic skills and revisit creation of set-theoretic identities.

Notice every universe $U$ carries with it the beginning of the concept of a function. Namely, given a set $A$ one has three possible transformations of any set $X$:\\
1. $X\to A\cap X$,\\
2. $X\to A\cup X$,\\
3. $X\to X^c$.

Thus, given any set $X$ we can produce a new set or we can transform it into a new set. Let's use the following notations:
\begin{Definition}
Given a subset $A$ of a universe $U$ let the \textbf{transformation} $i_A$ be defined by
$$i_A(X):=A\cap X$$
Similarly, $u_A$ is defined by
$$u_A(X):=A\cup X$$
and $c$ is defined by
$$c(X):=X^c$$
\end{Definition}
Notice there is an additional transformation, the \textbf{identity} $id$, defined by
$$id(X):=X$$

\begin{Definition}
Given any two transformations $t$ and $s$ of subsets of $U$, we define their \textbf{composition} $t\circ s$ by
$$t\circ s(X)=t(s(X))$$
$t^2$ is a shortcut for $t\circ t$.
\end{Definition}

\begin{Question}
What is $i_A^2$?
\end{Question}

\begin{Question}
What is $u_A^2$?
\end{Question}

\begin{Question}
What is $c^2$?
\end{Question}

\begin{Question}
What is $i_A\circ i_B$?
\end{Question}

\begin{Question}
What is $u_A\circ u_B$?
\end{Question}

\begin{Problem}
Show $i_A(X\cup Y)=i_A(X)\cup i_A(Y)$.
\end{Problem}

\begin{Problem}
Show $i_A(X\cap Y)=i_A(X)\cap i_A(Y)$.
\end{Problem}

\begin{Problem}
Show $u_A(X\cup Y)=u_A(X)\cup u_A(Y)$.
\end{Problem}

\begin{Problem}
Show $c(X\cup Y)=c(X)\cap c(Y)$.
\end{Problem}

\begin{Problem}
Show $c(X\cap Y)=c(X)\cup c(Y)$.
\end{Problem}

\begin{Problem}
Show $i_A=c\circ u_{A^c}\circ c$.
\end{Problem}

\begin{Problem}
Show $u_A=c\circ i_{A^c}\circ c$.
\end{Problem}

Notice the Axiom of Choice can be reformulated as follows (see \ref{ACDiff}):
\begin{Axiom}[Axiom of Choice]\label{AxiomOfChoice}
For every universe $U$ there is a transformation $\phi:2^U\to 2^U$ such that $\phi(A)\subset A$ and $\phi(A)$ is a minimal universe for each non-empty $A\subset U$.
\end{Axiom}

This formulation is more adequate to the spirit of this paper. The transformation $\phi$ in \ref{AxiomOfChoice} may be thought of as a kind of a feedback mechanism between $U$ and its expansion $2^U$. From that point of view \ref{AxiomOfChoice} makes perfect sense (sorry, Axiom Of Choice deniers).

\section{Cartesian product}

Cartesian product provides a way for different elements to interact. This culminates in the concept of a relation.

To define cartesian product one needs the concept of an ordered pair $(a,b)$. Notice $2^U$ comes with a natural partial order. Namely, $A\leq B$ can be defined as $A\subset B$. It makes perfect sense to use that partial order to define ordered pairs. From the point of view of $2^U$, an ordered pair should be a pair of different subsets $A$ and $B$ of $U$ such that $A\subset B$ or $B\subset A$. The smaller set is equal to $A\cap B$ and the larger set is equal to $A\cup B$.

\begin{Definition}
The \textbf{ordered pair} $(a,b)$ is the set $\{\{a\},\{a,b\}\}$.\\
The \textbf{cartesian product} $X\times Y$ is the set of all ordered pairs $(a,b)$, where $a\in X$ and $b\in Y$. Thus, $X\times Y$ is a subset of the double expansion $2^{2^U}$ of the universe $U$.
\end{Definition}
\begin{Remark}
Notice other authors arive at the same definition of $(a,b)$ differently (see \cite{Hal}, p.23).
\end{Remark}

In my teaching of graduate courses in topology I have seen students who thought
$X\times Y\setminus A\times B=(X\setminus A)\times (Y\setminus B)$ holds (it has to do with a problem in \cite{Mun}  on connectedness of $ X\times Y\setminus A\times B$ if $A$ and $B$ are proper subsets of connected sets $X$ and $Y$). Notice they would benefit from the idea of checking it for basic sets (what happens if $A=\emptyset\ne B$?).

The correct formula is
$$(X\times Y)\setminus (A\times B)=((X\setminus A)\times Y)\cup (X\times (Y\setminus B))$$ yet it is insufficient to test it only in extreme cases. Indeed, the cartesian product creates sets outside of a given universe $U$.

To prove $(X\times Y)\setminus (A\times B)=((X\setminus A)\times Y)\cup (X\times (Y\setminus B))$ one needs to observe that the statement \lq $C\times D$ is not empty\rq\ is equivalent to $\bar C\land \bar D$. 
Now the left side translates to $(\bar X\land \bar Y)\land \lnot(\bar A\land \bar B)$
and the right side translates to 
$(\bar X\land \lnot \bar A\land \bar Y)\lor (\bar X\land \bar Y\land \lnot \bar B)$.

In this case it is more beneficial to go from set theory to logic and resolve the issue there.

What happens if one goes from $(\bar X\land \bar Y)\land \lnot(\bar A\land \bar B)\equiv (\bar X\land \lnot \bar A\land \bar Y)\lor (\bar X\land \bar Y\land \lnot \bar B)$ back to set theory?
One gets the identity
$$(X\cap Y)\setminus (A\cap B)=((X\setminus A)\cap Y)\cup (X\cap (Y\setminus B))$$

Here are a few more pairs of related identities:\\
1. $X\times (A\cup B)=X\times A\cup X\times B$ and $X\cap (A\cup B)=X\cap A\cup X\cap B$,\\
2. $X\times (A\cap B)=X\times A\cap X\times B$ and $X\cap (A\cap B)=(X\cap A)\cap (X\cap B)$.

Each pair transfers to the same logical tautology.

Can we make it into a fancy theorem? Yes, we can!

\begin{Theorem}\label{ProductVsIntersection}
Suppose there is an equality $E$ involving cartesian products. If the 1st coordinate is independent of the second coordinate, then the validity of $E$ reduces to the validity of the equality obtained from $E$ by replacing $\times$ with $\cap$.
\end{Theorem}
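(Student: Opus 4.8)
The plan is to route both equalities through the logic of the preceding subsections and show that they pose one and the same tautology question. I would first set up notation: write $E$ as $P=Q$, where $P,Q$ are built from product atoms $C\times D$ using $\cup$, $\cap$, $\setminus$, and let $X_1,\dots,X_m$ be the base sets that occur in first coordinates and $Y_1,\dots,Y_n$ those that occur in second coordinates. The hypothesis that the first coordinate is independent of the second I would read as: these two lists are disjoint, so in each atom $C\times D$ the factor $C$ is an expression in the $X_i$ alone and $D$ in the $Y_j$ alone. Let $E'$ be the equality obtained by replacing every $\times$ with $\cap$, and let $\Phi$ be the logical statement produced from $E'$ by the Simplified Recipe for Converting Set-theoretic Equalities. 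The goal is to prove that $E$ and $E'$ are each valid exactly when $\Phi$ is a tautology.

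For $E'$ this is immediate: by the Law of Extremes \ref{LawOfExtremesThree} together with the Simplified Recipe, $E'$ holds for all sets if and only if $\Phi$ is a tautology. The work is to prove the same equivalence for $E$, and here I would argue by membership rather than by extremes, since a product lives in $2^{2^U}$ and the Law of Extremes does not apply to it directly. I would fix arbitrary sets $X_i,Y_j$ and an arbitrary pair $(a,b)$, set $\xi_i=[a\in X_i]$ and $\eta_j=[b\in Y_j]$, and compute the indicator $[(a,b)\in P]$. Since membership carries $\cup,\cap,\setminus$ to $\lor,\land,\land\lnot$, and since $(a,b)\in C\times D$ means $(a\in C)\land(b\in D)$, this indicator is the Boolean form of $P$ with each atom $C\times D$ replaced by $\hat C(\xi)\land\hat D(\eta)$, where $\hat C,\hat D$ are the Boolean forms of the factors $C,D$. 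The point to highlight is that this is verbatim the formula $\bar P$ occurring in $\Phi$, by the observation preceding the theorem that $\overline{C\times D}=\bar C\land\bar D$; one only renames $\bar X_i$ to $\xi_i$ and $\bar Y_j$ to $\eta_j$. Letting $(a,b)$ and the sets vary realizes every Boolean assignment, so $P=Q$ holds for all sets exactly when $\Phi$ is a tautology.

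Chaining the two equivalences then gives $E$ valid $\Longleftrightarrow$ $\Phi$ tautology $\Longleftrightarrow$ $E'$ valid, which is the theorem. The step I expect to be the main obstacle, and the one where the independence hypothesis is actually spent, is the verbatim identification of the two Boolean formulas. If some base set occurred both in a first coordinate and in a second coordinate, it would split into two independent Booleans $[a\in Z]$ and $[b\in Z]$ on the product side while collapsing to a single variable $\bar Z$ on the $\cap$ side; the formulas would then differ and the reduction could break. Independence is precisely the condition ruling this out, putting the first- and second-coordinate variables of the product translation in bijection with the single base variables of $\Phi$. I would take care to state and use it at exactly this point.
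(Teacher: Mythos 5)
Your proof is correct and takes essentially the same route as the paper, whose (informal) justification is precisely that a product atom $C\times D$ and the corresponding intersection atom $C\cap D$ both translate to $\bar C\land \bar D$, so that $E$ and its $\cap$-version transfer to one and the same logical tautology. Your membership/indicator argument and the explicit identification of where independence is spent (preventing a repeated base set from splitting into two independent Boolean variables on the product side) simply make rigorous what the paper leaves as the discussion preceding the theorem.
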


Here is an example of an equality where the 1st coordinate is not independent of the second coordinate:
$$X\times X\setminus A\times A=(X\setminus A)\times (X\setminus A)$$
(the set $A$ appears in both coordinates). The equality is false in general, yet the corresponding equality
$$X\cap X\setminus A\cap A=(X\setminus A)\cap (X\setminus A)$$
always holds.

\begin{Remark}
Notice \ref{ProductVsIntersection} provides a context in which we may use Law of Extremes again. That shows its great utility.
\end{Remark}

\section{Functions}

\begin{Definition}
Given two sets $X$ and $Y$, a \textbf{function} $f:X\to Y$ is a subset $\Gamma(f)$ (the \textbf{graph} of $f$) of $X\times Y$ with the property that for each $x\in X$ there is unique $y\in Y$ (denoted by $f(x)$) so that $(x,y)\in \Gamma(f)$.
\end{Definition}

\begin{Definition}
Any function $f:X\to Y$ leads to two transformations: $f:2^X\to 2^Y$ and $f^{-1}:2^Y\to 2^X$:\\
1. $y\in f(A) \iff y=f(x)$ for some $x\in A$.\\
2. $x\in f^{-1}(A) \iff f(x)\in A$.
\end{Definition}

What is of interest is under what conditions do transformations $f$ and $f^{-1}$ preserve set-theoretic identities.

\begin{Problem}
Characterize functions $f:X\to Y$ such that $f(A\cup B)=f(A)\cup f(B)$ for all $A,B\in 2^X$.
\end{Problem}

\begin{Problem}
Characterize functions $f:X\to Y$ such that $f(A\cap B)=f(A)\cap f(B)$ for all $A,B\in 2^X$.
\end{Problem}

\begin{Problem}
Characterize functions $f:X\to Y$ such that $f(A^c)=f(A)^c$ for all $A\in 2^X$.
\end{Problem}

\begin{Problem}
Characterize functions $f:X\to Y$ such that $f(A\setminus B)=f(A)\setminus f(B)$ for all $A,B\in 2^X$.
\end{Problem}

\begin{Problem}
Characterize functions $f:X\to Y$ such that $f^{-1}(A\cup B)=f^{-1}(A)\cup f^{-1}(B)$ for all $A,B\in 2^Y$.
\end{Problem}

\begin{Problem}
Characterize functions $f:X\to Y$ such that $f^{-1}(A\cap B)=f^{-1}(A)\cap f^{-1}(B)$ for all $A,B\in 2^Y$.
\end{Problem}

\begin{Problem}
Characterize functions $f:X\to Y$ such that $f^{-1}(A^c)=f^{-1}(A)^c$ for all $A\in 2^Y$.
\end{Problem}

\begin{Problem}
Characterize functions $f:X\to Y$ such that $f^{-1}(A\setminus B)=f^{-1}(A)\setminus f^{-1}(B)$ for all $A,B\in 2^Y$.
\end{Problem}

\begin{Problem}
Suppose $f:X\to Y$ is a function and $B\subset Y$. Show $f(f^{-1}(B))=B\cap f(X)$.
\end{Problem}

The following two problems are variants of Russell's Paradox.

\begin{Problem}
Suppose $f_i:\mathbb N\to \{0,1\}$ is a sequence of functions from the set of natural numbers $\mathbb N$. Show there is a function $g:\mathbb N\to \{0,1\}$ such that $g\ne f_i$ for all $i$.
\end{Problem}

\begin{Problem}
Suppose $\phi:2^U\to U$ is a function. Show there existence of two different subsets $A,B$ of $U$ such that $\phi(A)=\phi(B)$.
\end{Problem}

\section{Arbitrary unions and intersections}

Up to now, our attention was focused on finite unions and finite intersections. It is time to extend our basic axiom to arbitrary unions and arbitrary intersections.

The notation $\{A_s\}_{s\in S}$ is a shortcut for a function $A:S\to 2^U$. Thus, $A_s$ is another way of denoting the value $A(s)$ of the function $A$ and $A_s$ is a subset of $U$.

\begin{Definition}
Suppose $\{A_s\}_{s\in S}$ is a family of subsets of a universe $U$. Their \textbf{union}
$\bigcup_{s\in S} A_s$ is the set of all elements of $U$ that belong to at least one of the sets $\{A_s\}_{s\in S}$.\\
Their \textbf{intersection}
$\bigcap_{s\in S} A_s$ is the set of all elements of $U$ that belong to each of the sets $\{A_s\}_{s\in S}$.
\end{Definition}

By checking the Justification of the Identity Generating Machine \ref{LawGenerator} we see that it can be extended as follows:
\begin{Law}[Extended Law of Extremes]\label{ExtendedLawOfExtremes}
Any equality of sets expressed in terms of arbitrary unions, arbitrary interesections, and difference is valid if it holds in all extreme cases: each set being either empty or
 equal to the current universe $U$.
\end{Law}

\begin{Problem}
Show $B\cap (\bigcup_{s\in S} A_s)= \bigcup_{s\in S} (A_s\cap B)$.
\end{Problem}

\begin{Problem}
Show $B\cup (\bigcup_{s\in S} A_s)= \bigcup_{s\in S} (A_s\cup B)$.
\end{Problem}

\begin{Problem}
Show $ (\bigcap_{s\in S} A_s)\cap (\bigcap_{s\in S} B_s)= \bigcap_{s\in S} (A_s\cap B_s)$.
\end{Problem}

\begin{Problem}
Show $(\bigcup_{s\in S} A_s)^c= \bigcap_{s\in S} A_s^c$.
\end{Problem}

Here is a good project for very good students: extend Law of Extremes to arbitrary cartesian products.

\begin{Definition}
Suppose $\{A_s\}_{s\in S}$ is a family of subsets of a universe $U$. Their \textbf{product}
$\prod_{s\in S} A_s$ is the set of all functions $f:S\to U$ such that $f(s)\in A_s$ for each $s\in S$.
\end{Definition}

\begin{Problem}
Suppose $A_i$, $i\ge 1$, are infinite subsets of $U$. Show there exists $B\subset U$ such that $B\ne A_i$ for all $i$. Conclude $2^U$ is not countable.
\end{Problem}

\section{Quantifiers}

Statements of the type \lq there exists\rq\ and \lq for all \rq\ originate from set theory. To say \lq $A$ is not empty\rq\ is the same as saying \lq there exists $x\in A$\rq\ . To say \lq $A$ is a subset of $B$\rq\ is the same as saying \lq for all $x\in A$ one has $x\in B$\rq\ .

How to transfer those concepts to logic?

Suppose $\{p_n\}_{n=1}^\infty$ is an infinite sequence of statements. What does it mean $p_1\land p_2\land\ldots$ is true? What does it mean $p_1\lor p_2\lor\ldots$ is true? 

Using our strategy of converting statements $p$ to subsets $\bar p$ of $SMU$, one sees that $p_1\land p_2\land\ldots$ converts to $\bar p_1\cap \bar p_2\cap\ldots$ and that set is non-empty if and only if all $\bar p_n$ are non-empty. Therefore, the criterion for $p_1\land p_2\land\ldots$ being true is: all $p_n$ must be true.

Similarly, $p_1\lor p_2\lor\ldots$ converts to $\bar p_1\cup \bar p_2\cup\ldots$ and that set is non-empty if and only if at least one $\bar p_n$ is non-empty. Therefore, the criterion for $p_1\lor p_2\lor\ldots$ being true is: at least one $p_n$ must be true.

Now we apply the following shortcuts: $p_1\land p_2\land\ldots$ is simplified
to $\forall_{n\ge 1}\ p_n$ and $p_1\lor p_2\lor\ldots$ is simplified
to $\exists_{n\ge 1}\ p_n$.

Instead of a sequence of statements one may consider arbitrary sets of statements $\{p_s\}_{s\in S}$.

\begin{Definition}
$\forall_{s\in S}\ p_s$ is true exactly when all $\{p_s\}_{s\in S}$ are true.

$\exists_{s\in S}\ p_s$ is true exactly when at least one $\{p_s\}_{s\in S}$ is true.
\end{Definition}

Now the strategy of converting $\forall_{s\in S}\ p_s$ to $\bigcap_{s\in S} \bar p_s$ and converting $\exists_{s\in S}\ p_s$ to $\bigcup_{s\in S} \bar p_s$ works, so one can use it to derive new tautologies of logic.

\begin{Problem}
Show $\forall_{s\in S}\ p_s\implies \exists_{s\in S}\ p_s$.
\end{Problem}

\begin{Problem}
Show $\lnot(\forall_{s\in S}\ p_s)\equiv \exists_{s\in S}\ \lnot p_s$.
\end{Problem}

\begin{Problem}
Show $\forall_{s\in S}\ (p\implies q_s)\equiv (p\implies (\forall_{s\in S}\ q_s))$.
\end{Problem}

\begin{Problem}
Show $(\forall_{s\in S}\ (p_s\implies q_s))\implies ((\forall_{s\in S}\ p_s)\implies (\forall_{s\in S}\ q_s))$.
\end{Problem}

\begin{Problem}
Show $\exists_{t\in T}(\forall_{s\in S}\ p_{s,t})\implies (\forall_{s\in S} \exists_{t\in T}\ p_{s,t})$.
\end{Problem}

\section{Equivalence relations}

The concept of an equivalence relation $\sim$ on a set $X$ and the concept of the set $X/{\sim}$ of equivalence classes are quite difficult for students.

My view is that the best way to approach them is from the expansion $2^X$ of $X$.

Most abstract constructions in mathematics involve creating a partition of a given set $X$.
\begin{Definition}
A \textbf{partition} of a set $X$ is a subset $\mathcal{P} $ of $2^X$ with the property that
$\bigcup \mathcal{P} =X$ and $A\cap B=\emptyset$ for every two different elements $A$ and $B$ of $\mathcal{P} $.
\end{Definition}

\begin{Example}
One can partition the set of integers $\mathbb Z$ into two sets: $Even$  consisting of all even integers and  $Odd$ consisting of all odd integers. Notice that declaring $Even+Even=Even$, $Even+Odd=Odd$, and $Odd+Odd=Even$ turns this partition into an Abelian group.
\end{Example}

To describe a partition $\mathcal{P} $ intrinsically (from the point of view of $X$) one needs to give necessary and sufficient conditions for two points $x,y\in X$ to belong to the same element of $\mathcal{P} $. That leads to the concept of a relation on the set $X$.

\begin{Definition}
A \textbf{relation} $R$ on a set $X$ is a subset of $X\times X$. A common shortcut for $(x,y)\in \mathcal{R} $ is $x\mathcal{R} y$.
\end{Definition}

\begin{Example}
Each partition $\mathcal{P} $ of $X$ induces a natural relation $x\sim_{\mathcal{P} } y$ on $X$. Namely,
$x\sim_{\mathcal{P} } y$ if and only if $x$ and $y$ belong to the same element $A$ of the partition $\mathcal{P} $.
\end{Example}

The relation $x\sim_{\mathcal{P} } y$ has natural properties:\\
1. $x\sim_{\mathcal{P} } x$ for all $x\in X$;\\
2. $x\sim_{\mathcal{P} } y$ implies $y\sim_{\mathcal{P} } z$;\\
3. $x\sim_{\mathcal{P} } y$ and $y\sim_{\mathcal{P} } z$ imply $x\sim_{\mathcal{P} } z$.

That can be generalized as follows:
\begin{Definition}
A relation $\mathcal{R} $ on $X$ is \textbf{reflexive} if $x\mathcal{R} x$ for all $x\in X$.\\
A relation $\mathcal{R} $ on $X$ is \textbf{symmetric} if $x\mathcal{R} y$ implies $y\mathcal{R} x$ for all $x,y\in X$.\\
A relation $\mathcal{R} $ on $X$ is \textbf{transitive} if $x\mathcal{R} y$ and $y\mathcal{R} z$ implies $x\mathcal{R} z$ for all $x,y,z\in X$.\\
$\mathcal{R} $ is an \textbf{equivalence relation} if it is reflexive, symmetric, and transitive.
\end{Definition}

\begin{Problem}
For every equivalence relation $\mathcal{R} $ on $X$ there is a unique partition $\mathcal{P} $ of $X$ such that $\mathcal{R} =\sim_{\mathcal{P} }$.
\end{Problem}

\section{Ordering}

Notice $2^U$ comes with a natural order, namely the inclusion of sets. It is a basic example of a partial order.

\begin{Definition}
A (non-strict) \textbf{partial order} is a binary relation "$\leq $" over a set $P$ which is antisymmetric, transitive, and reflexive, i.e., for all $a$, $b$, and $c$ in $P$, we have that:\\
1. $a \leq  a$ (\textbf{reflexivity});\\
2. if $a \leq  b$ and $b \leq  a$ then $a = b$ (\textbf{antisymmetry});\\
3. if $a \leq  b$ and $b \leq  c$ then $a \leq  c$ (\textbf{transitivity}).
\end{Definition}

A chain of subsets of $U$ is a family $\mathcal{S}$ of subset such that any two elements $A,B\in \mathcal{S}$ are in a relation: $A\subset B$ or $B\subset A$. This leads to natural generalizations.

\begin{Definition}
A \textbf{total order} is a partial order "$\leq $" over a set $P$ such that any two elements $a,b\in P$ are in a relation: $a\leq b$ or $b\leq a$.
\end{Definition}

\begin{Definition}
A \textbf{chain} in a partially ordered set $P$ is a subset $T$ that becomes totally ordered under the
induced order from $P$.
\end{Definition}

Operations of intersection and union lead to natural enlarging of chains of subsets.

\begin{Problem}
Suppose  $\mathcal{S}$ is a chain of subsets of $U$ and $\mathcal{F}\subset \mathcal{S}$.
Show $\mathcal{S}\cup \{\bigcup \mathcal{F}\}$ is a chain.
\end{Problem}

\begin{Problem}
Suppose  $\mathcal{S}$ is a chain of subsets of $U$ and $\mathcal{F}\subset \mathcal{S}$.
Show $\mathcal{S}\cup \{\bigcap \mathcal{F}\}$ is a chain.
\end{Problem}

Of great interest are maximal chains.

\begin{Definition}
A chain $T$ in a partially ordered set $P$ is \textbf{maximal} if $T=T'$ for every chain $T'$ containing $T$.
\end{Definition}

\begin{Problem}
Find the maximal chain of subsets of natural numbers $\mathbb N$ containing sets $A_n=\{1,\ldots,n\}$.
\end{Problem}

\begin{Problem}
Suppose  $\mathcal{M}$ is a maximal chain of subsets of $U$ and $\mathcal{F}\subset \mathcal{M}$.
Show $\bigcup \mathcal{F}\in \mathcal{M}$.
\end{Problem}

\begin{Problem}\label{MinimalElementsOfMaxChains}
Suppose  $\mathcal{M}$ is a maximal chain of subsets of $U$ and $\mathcal{F}\subset \mathcal{M}$.
Show $\bigcap \mathcal{F}\in \mathcal{M}$.
\end{Problem}

Problem \ref{MinimalElementsOfMaxChains} leads naturally to the concept of well-ordering.

The set of natural numbers is a special case of a {\bf well-ordered set}. The most interesting basic proofs:\\
a. $\sqrt{2}$ being irrational;\\
b. Existence of infinitely many primes;\\
c. Uncountability of irrational numbers;\\
all depend on the order of natural numbers.

Our definition of well-order is designed to create interesting assignments for students.

\begin{Definition}\label{WellOrderDef}
$(X,<)$ is a {\bf well-ordered set} if \lq$<$\rq\ is a binary relation 
such that every non-empty subset $A$ of $X$ has a minimum.
That means $a < b$ implies $a\ne b$ and there is a function $\min:2^X\to X$
 such that $\min(A)\in A$ if $A\ne\emptyset$ and
$\min(A) < x$ for all $x\in A\setminus \{\min(A)\}$.
\end{Definition}

\begin{Problem}
Suppose $(X,<)$ is a well-ordered set and $a\ne b$ in $X$.
Show that $a < b$ or $b < a$.
\end{Problem}

\begin{Problem}
Suppose $(X,<)$ is a well-ordered set and $a,b,c\in X$.
Show that $a < b$ and $b < c$ implies $a < c$.
\end{Problem}

\begin{Remark}
Given a well-ordering $<$ on a set $X$ we may \textbf{extend} it to a partial order $\leq$ by declaring $a\leq b$ if and only if $a=b$ or $a < b$. \\
Conversely, given a partial order $\leq$ on $X$ we may define a binary relation $<$ on $X$ be declaring $a < b$ if $a\ne b$ and $a\leq b$ (notice $<$ may not be a well-ordering).\\
We will use this process of toggling between $\leq$ and $<$ quite often.
\end{Remark}

\begin{Definition}
Given a function $f:X\to Y$ from a set $X$ to a partially ordered set $Y$ (respectively, well-ordered set $Y$) we may \textbf{pull-back} the ordering relation from $Y$ to $X$ as follows: $x_1\leq x_2$ if and only if $f(x_1)\leq f(x_2)$ (respectively, $x_1 < x_2$ if and only if $f(x_1) < f(x_2)$).
\end{Definition}

\begin{Problem}
Show that the pull-back of a partial ordering is always a partial ordering.
\end{Problem}

\begin{Problem}
Find necessary and sufficient conditions for the pull-back of a well-ordering to be a well-ordering.
\end{Problem}

\begin{Notation}
Given a well-ordering $<$ on a set $X$ and given $x\in X$ we will use notation $(-\infty,x]$ for the set
$\{y\in X | y\leq x\}$ and $(-\infty,x)$ for the set
$\{y\in X | y < x\}$.
\end{Notation}

As we can see well-ordering on a universe $U$ is related to transformations introduced in \ref{AxiomOfChoice}. It is more convenient to discuss choice (or selection) functions.

\begin{Definition}
A \textbf{choice function} (or \textbf{selection function}) is a function $\phi:2^U\to U$ such that $\phi(A)\in A$ for every $A\ne\emptyset$.
\end{Definition}

The remainder of this section is devoted to the fact that choice functions provide a feedback mechanism from $2^U$ to $U$ resulting in transferring the partial order on $2^U$ into a well-ordering of $U$. This is the most difficult part of the material but should be accessible to talented honors students.

\begin{Problem}
Show that a partial order $\leq$ on $U$ leads to a well-ordering of $U$ if and only if there is a choice function $\phi:2^U\to U$ such that $A\subset B$ implies $\phi(B)\leq \phi(A)$.
\end{Problem}

\begin{Theorem}\label{FromChoiceToWellOrder}
If there is a choice function $\phi:2^{U}\to U$, then $U$ can be well-ordered.
\end{Theorem}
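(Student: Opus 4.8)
The plan is to carry out the classical derivation of the Well-Ordering Theorem from a choice function, but packaged through the maximal-chain technology developed above rather than through explicit ordinals. The guiding idea is that $\phi$ should behave like a $\min$ operator: I want a well-order on $U$ for which $\phi(A)$ is the least element of each nonempty $A$. In the language of the preceding Problem, it suffices to produce a partial order $\leq$ on $U$ together with a compatible selection, so the genuine work is to manufacture, out of $\phi$ alone, the chain of \emph{initial segments} of the sought order.

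First I would introduce the notion of a \textbf{$\phi$-tower}: a family $\mathcal{C}\subset 2^U$ that (i) contains $\emptyset$, (ii) is a chain under inclusion, (iii) is closed under unions of arbitrary subfamilies, and (iv) satisfies, for every $A\in\mathcal{C}$ with $A\ne U$, that $A\cup\{\phi(U\setminus A)\}$ again lies in $\mathcal{C}$ and is its immediate successor there. Intuitively, each step of a tower removes from $U$ exactly the element $\phi$ selects from what remains. The crucial lemma is a \textbf{comparison principle}: any two $\phi$-towers are comparable, in the sense that one is an initial part of the other. I would prove this by a minimal-counterexample argument — if two towers first diverge, look at the union of their common members, which lies in both by closure (iii), and observe that property (iv) forces the next element of each to be the same $\phi$-value, a contradiction. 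Granting the comparison principle, the union $\mathcal{M}$ of \emph{all} $\phi$-towers is again a $\phi$-tower, is maximal among them, and is well-ordered by inclusion.

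Next I would show $\bigcup\mathcal{M}=U$. By union-closure $\bigcup\mathcal{M}\in\mathcal{M}$; were it a proper subset $M_0\subsetneq U$, then $M_0\cup\{\phi(U\setminus M_0)\}$ would extend $\mathcal{M}$ to a strictly larger $\phi$-tower, contradicting maximality. Thus every element of $U$ is introduced at some stage. Finally I would read off the well-order: for $a\in U$ let $P_a=\bigcup\{M\in\mathcal{M}:a\notin M\}$ be its predecessor segment, declare $a\leq b$ iff $P_a\subseteq P_b$, and verify Definition \ref{WellOrderDef} directly. Given nonempty $A\subseteq U$, the least member $M_A$ of $\mathcal{M}$ meeting $A$ exists (its intersection-closure is exactly Problem \ref{MinimalElementsOfMaxChains}); its immediate predecessor $P$ misses $A$ while $M_A$ meets it, and since $M_A\setminus P$ is the singleton $\{\phi(U\setminus P)\}$, that element must lie in $A$ and is the required $\min(A)$.

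The step I expect to be the main obstacle is the comparison principle for $\phi$-towers, together with the associated claim that $\mathcal{M}$ is well-ordered by inclusion: this is precisely the point where the transfinite recursion is hiding, and carrying it out cleanly without ordinals — leaning instead on the union- and intersection-closure of chains from the earlier Problems — is the delicate part. Everything after it (maximality forcing $\bigcup\mathcal{M}=U$, and the verification of Definition \ref{WellOrderDef}) is routine bookkeeping with the closure properties.
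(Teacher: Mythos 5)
There is a genuine gap, and it sits exactly where your scaffolding is supposed to do its work: you never establish that a single $\phi$-tower exists, and with your definition that existence claim is essentially the whole theorem. Conditions (iii) and (iv) together force any $\phi$-tower $\mathcal{C}$ to run all the way to the top: by (iii) the set $M_0=\bigcup\mathcal{C}$ belongs to $\mathcal{C}$, and if $M_0\ne U$ then (iv) puts $M_0\cup\{\phi(U\setminus M_0)\}$ into $\mathcal{C}$, contradicting the fact that every member of $\mathcal{C}$ lies inside $M_0$; hence every tower already satisfies $\bigcup\mathcal{C}=U$. Granting your comparison principle, any two towers, each an initial part of the other and both reaching $U$, must coincide, so there is at most one $\phi$-tower. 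Consequently your $\mathcal{M}$ is either that unique tower or the empty family, the word \emph{maximal} does no work, and your third-paragraph argument (extend $\mathcal{M}$ if $\bigcup\mathcal{M}\ne U$, contradicting maximality) is vacuous, since there is never anything partial to extend. The transfinite recursion is not hiding in the comparison principle, as you guessed; it is hiding in the unproved assertion that a tower exists at all, and the union-of-all-towers device cannot produce it: the union of an empty collection of towers is the empty family, which does not even contain $\emptyset$. A secondary unproved step: you invoke Problem \ref{MinimalElementsOfMaxChains} for $\mathcal{M}$, but that problem concerns maximal chains in $2^U$, and maximality among towers does not give that without a further argument.

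The repair is to let towers be partial, and that is precisely the shape of the paper's proof. Arguing by contradiction, so that $m(X):=\phi(U\setminus X)$ is defined for every well-ordered subset $X$ of $U$, the paper considers the family $\mathcal{F}$ of all well-ordered subsets $(X,<_X)$ of $U$ such that $x=m((-\infty,x))$ for every $x\in X$: these are your towers, read off as well-ordered subsets of $U$ rather than as chains in $2^U$, but crucially with no closure under the successor operation demanded. Existence is then trivial, since $\{m(\emptyset)\}\in\mathcal{F}$; the genuine crux is your comparison principle (the paper's Claims 1 and 2: of any two members of $\mathcal{F}$, one is an initial segment of the other with the induced order); and the extension step finally has teeth: $V=\bigcup\mathcal{F}$ is well-ordered and $V\cup\{m(V)\}\in\mathcal{F}$, which forces $m(V)\in V$, contradicting $m(V)\in U\setminus V$. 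If you prefer to stay in $2^U$, weaken your condition (iv) to require only that every member $A$ of $\mathcal{C}$ other than its maximum have an immediate successor in $\mathcal{C}$ equal to $A\cup\{\phi(U\setminus A)\}$. Then $\{\emptyset\}$ is a tower, your comparison lemma becomes the real content, and your maximality-plus-extension argument, applied to the union of all such partial towers, completes the proof along the lines you envisaged.
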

\begin{proof}
Suppose $U$ cannot be well-ordered. That means for every well-ordering on a subset $X$ of $U$
there is $m(X)\in U\setminus X$. Existence of $\phi$ makes it possible for $m$ to be a function. Consider the family $\mathcal{F}$ of all well-ordered subset $(X,<_X)$ of $U$ with initial element $m(\emptyset)$ and
$x=m(-\infty,x)$ for every $x\in X$.

\textbf{Claim 1}. If $(X,<_X), (Y,<_Y)\in F$ and $X\subset Y$, then $X$ is an initial subset of $Y$ and the restriction of order $<_Y$ to $X$ equals $<_X$.\\
\textbf{Proof of Claim 1}: Consider the set $Z$ of all $x\in X$ such that the interval $(-\infty,x]$ in $X$
equals the interval $(-\infty,x]$ in $Y$. If $Z=X$, we are done. Otherwise, consider $z=\min(X\setminus Z)$ ($\min$ being the minimum function with respect to the order $<_X$). Now, the interval $(-\infty,z)$ in $X$
equals the interval $(-\infty,z)$ in $Y$ and $z=m(-\infty,x)$ resulting in the interval $(-\infty,z]$ in $X$
being equal the interval $(-\infty,z]$ in $Y$, a contradiction.

\textbf{Claim 2}. If $(X,<_X), (Y,<_Y)\in F$ and $Y$ is not a subset of $X$, then $X$ is an initial subset of $Y$ and the restriction of order $<_Y$ to $X$ equals $<_X$.\\
\textbf{Proof of Claim 2}: Consider the set $Z$ of all $y\in Y$ such that the interval $(-\infty,y]$ is not a subset of $X$. $Z\ne\emptyset$ as $Y\subset X$ otherwise. Consider $z=\min(Z)$ ($\min$ being the minimum function with respect to the order $<_Y$). Now, the interval $(-\infty,z)$ in $Y$ is a subset of $X$. Either it is equal to $X$ (resulting in $X\subset Y$) or $z=m(-\infty,z)$ belongs to $X$, a contradiction.

Claim 1 and 2 imply that $V=\bigcup{\mathcal{F} }$ is well-ordered. One can enlarge it to a well-ordered set $V\cup\{m(V)\}\in \mathcal{F}$, a contradiction.
\end{proof}

\section{Variants of Axiom of Choice}

This section is devoted to alternative formulations of the Axiom of Choice.

\begin{Problem} \label{ACDiff}
Show that Axiom of Choice \ref{AxiomOfChoice} is equivalent to the following:
$\prod_{s\in S} A_s$ is not empty for each family $\{A_s\}_{s\in S}$ of non-empty sets.
\end{Problem}

\begin{Problem}
Show that Axiom of Choice \ref{AxiomOfChoice} is equivalent to the following:
For every partition $\mathcal{P} $ on a set $X$ there is a choice function $\phi:\mathcal{P} \to X$ such that $\phi(A)\in A$ for all $A\in \mathcal{P} $.
\end{Problem}

\begin{Problem}
Show that Axiom of Choice \ref{AxiomOfChoice} is equivalent to the following:
Given a family $\{A_s\}_{s\in S}$ of mutually disjoint non-empty sets
there is a subset $B$ of $\bigcup\limits_{s\in S} A_s$
such that $B\cap A_s$ is a one-point set for each $s\in S$.
\end{Problem}

\begin{Problem}
Show that Axiom of Choice \ref{AxiomOfChoice} is equivalent to the following:
Given a family $\{A_s\}_{s\in S}$ of non-empty sets
there is a function $f\colon S\to\bigcup\limits_{s\in S} A_s$
such that $f(s)\in A_s$ for each $s\in S$.
\end{Problem}

\begin{Problem}
Show that Axiom of Choice \ref{AxiomOfChoice} is equivalent to the following:
For any surjective function $p:E\to B$ there is a section $s:B\to E$ (that means $p\circ s=id_B$).
\end{Problem}

\section{Alternatives to the Axiom of Choice}

\begin{Definition}
Suppose $P$ is a partially ordered set. A subset $T$ has an \textbf{upper bound} $u$ in $P$ if $t \leq u$ for all t in $T$. Note that $u$ is an element of $P$ but need not be an element of $T$. An element $m$ of $P$ is called a \textbf{maximal element} if there is no element $x$ in $P$ for which $m < x$.

A \textbf{chain} in $P$ is a totally ordered subset of $P$ (in the order induced from $P$).
\end{Definition}

\begin{Problem}
Suppose there is a choice function $\phi:2^U\to U$ and $U$ is partially ordered so that every chain has an upper bound in $U$. Show $U$ has a maximal element.
\end{Problem}

\begin{Problem}
Consider the set $\mathcal{C} $ of all functions $\psi:A\to U$, where $A\subset 2^U$, such that $\psi(B)\in B$ if $B\in A$ is non-empty. Define $\psi_1\leq \psi_2$ if $\psi_1\subset \psi_2$ (recall $\psi:A\to U$ is a subset of $A\times U\subset 2^U\times U$). Show every chain in $\mathcal{C} $ has an upper bound.
\end{Problem}

\begin{Problem}
Consider the set $\mathcal{C} $ of all functions $\psi:A\to U$, where $A\subset 2^U$, such that $\psi(B)\in B$ if $B\in A$ is non-empty. Define $\psi_1\leq \psi_2$ if $\psi_1\subset \psi_2$ (recall $\psi:A\to U$ is a subset of $A\times U\subset 2^U\times U$). Show every maximal element in $\mathcal{C} $ is a choice function.
\end{Problem}

\begin{Problem}
Suppose two chains $C$ and $D$ in a partially ordered set $P$ are well-ordered and $C\cup D$ is a chain. Show $C\cup D$ is well-ordered.
\end{Problem}

\begin{Problem}
Suppose a chain $C$ in a partially ordered set $P$ is well-ordered and is maximal in the family of all well-ordered chains in $P$. Show $C$ is maximal in the family of all chains in $P$.
\end{Problem}

\begin{Definition}
A family $\mathcal{FC}$ of subsets of a set $X$
is of \textbf{finite character} if $\emptyset\in\mathcal{FC}$ and $A\in\mathcal{FC}$ is equivalent to all finite subsets of $A$ belonging to $\mathcal{FC}$.
\end{Definition}

A primary example of a family of finite character is the family of all linearly independent subsets of a vector space.

\begin{Problem}
Suppose $P$ is a partially ordered set. Show the family of all chains in $P$ is a family of finite character.
\end{Problem}

\begin{Problem}
Suppose $\mathcal{FC}$ is a family of subsets of $X$ that
is of finite character. Order $\mathcal{FC}$ by inclusion. Show every chain in $\mathcal{FC}$ has its union as an upper bound.
\end{Problem}

\begin{Axiom}[Zermelo's Well-ordering Axiom]\label{WOA}
Any set can be well-ordered.
\end{Axiom}

\begin{Axiom}[Kuratowski-Zorn Axiom]\label{KZLemma}
Suppose a partially ordered set $P$ has the property that every chain (i.e. totally ordered subset) has an upper bound in $P$. Then the set $P$ contains at least one maximal element.
\end{Axiom}

\begin{Axiom}[Hausdorff Maximal Principle]\label{HMaxPrinciple}
In any partially ordered set, every totally ordered subset is contained in a maximal totally ordered subset. Here a maximal totally-ordered subset is one that, if enlarged in any way, does not remain totally ordered.
\end{Axiom}

\begin{Axiom}[Teichm\" uller-Tukey Axiom]\label{TukeyAxiom}
Suppose $\mathcal{FC}$ is a family of subsets of a set $X$ and $A\in \mathcal{FC}$. If $\mathcal{FC}$
is of finite character, then there is $A_0\in \mathcal{FC}$ containing $A$ which is maximal ($A_0\subset B\in \mathcal{FC}$ implies $A_0=B$).
\end{Axiom}

\begin{Problem}
Show Axiom of Choice implies Kuratowski-Zorn Axiom.
\end{Problem}

\begin{Problem}
Show Kuratowski-Zorn Axiom  implies Teichm\" uller-Tukey Axiom.
\end{Problem}

\begin{Problem}
Show Teichm\" uller-Tukey Axiom implies Hausdorff Maximal Principle.
\end{Problem}

\begin{Problem}
Show Hausdorff Maximal Principle implies the Axiom of Choice.
\end{Problem}

\section{Ordinal numbers}

Given two well-ordered sets $(X,<_X)$ and $(Y,<_Y)$ we may define an inequality between them using the concept of the pull-back of an ordering. That leads to a partial ordering on any family of ordered sets and it leads to the concept of equivalent well-ordered sets. The interesting aspect of this is that the resulting set of equivalence classes ends up with a well-ordering leading to the concept of \textbf{ordinals}. This section is devoted to developing necessary tools to establish those results.

There is basically one way to define equivalence of well-ordered sets:
\begin{Definition}\label{EquivalenceOfOrdered}
Two well-ordered sets $(X,<_X)$ and $(Y,<_Y)$ are equivalent (notation: $(X,<_X)\sim (Y,<_Y)$) if there is a bijection
$f:X\to Y$ such that the order $<_X$ on $X$ is the pull-back of the order $<_Y$ on $Y$ under $f$.
\end{Definition}

\begin{Problem}
Show the above definition does define an equivalence relation on the family of all ordered subsets of $U$.
\end{Problem}

There are two possible ways of defining inequality between ordered sets. One is analogous to \ref{EquivalenceOfOrdered}:
\begin{Definition}\label{InequalityOfOrdered}
Two well-ordered sets $(X,<_X)$ and $(Y,<_Y)$ are satisfying inequality $(X,<_X)\leq (Y,<_Y)$) if there is an injection
$f:X\to Y$ such that the order $<_X$ on $X$ is the pull-back of the order $<_Y$ on $Y$ under $f$.
\end{Definition}

The other is obtained by strengthening \ref{InequalityOfOrdered}. Namely, we want the image $f(X)$ of $X$ to be an initial subset of $Y$.
\begin{Definition}
A subset $A$ of a well-ordered set $(Y,<_Y)$ is called \textbf{initial} if either $A=Y$ or there is $y\in Y$ such that
$A=(-\infty,y)=\{z\in Y | z < y\}$.
\end{Definition}

\begin{Definition}\label{MinPresFuncDef}
Suppose $(X,<_X)$ and $(Y,<_Y)$
are well-ordered sets.
A function $f\colon X\to Y$ is {\bf minimum-preserving}
if $\min(f(A))=f(\min(A))$ for every non-empty subset $A$ of $X$.
\end{Definition}

\begin{Definition}\label{OrderPresFuncDef}
Suppose $(X,\leq_X)$ and $(Y,\leq_Y)$
are partially ordered sets.
A function $f\colon X\to Y$ is {\bf order-preserving}
if $a\leq_X b$ implies $f(a)\leq_Y f(b)$.
\end{Definition}

\begin{Problem}
Suppose $(X,<_X)$ and $(Y,<_Y)$
are well-ordered sets. Show $f\colon X\to Y$ is minimum-preserving
if and only if it is order-preserving.
\end{Problem}

\begin{Definition}\label{MinOfFunctionsDef}
Suppose $(Y,<_Y)$
is a well-ordered set. Given two functions
$f,g\colon X\to Y$ define their {\bf minimum}
$\min(f,g)$ as $h\colon X\to Y$ so that
$h(x)=\min(f(x),g(x))$.
\end{Definition}

\begin{Problem}
Suppose $(X,<_X)$ and $(Y,<_Y)$
are well-ordered sets. Show the minimum of two
order-preserving functions is order-preserving.
\end{Problem}

\begin{Problem}
Suppose $(Y,<_Y)$
is a well-ordered set. Given a family of functions
$f_s\colon X\to Y$, $s\in S$, define their minimum
$\min\{f_s\}_{s\in S}$.
\end{Problem}

\begin{Problem}
Suppose $(X,<_X)$ and $(Y,<_Y)$
are well-ordered sets. Given a family of order-preserving functions
$f_s\colon X\to Y$, $s\in S$, show their minimum
$\min\{f_s\}_{s\in S}$ is order-preserving.
\end{Problem}

\begin{Problem}
Suppose $(X,<_X)$ and $(Y,<_Y)$
are well-ordered sets.  Given a family of order-preserving injections
$f_s\colon X\to Y$, $s\in S$, show their minimum
$\min\{f_s\}_{s\in S}$ is an injection.
\end{Problem}

Thus one can create the concept of a {\bf minimal injection}
from a well-ordered set $X$ to a well-ordered set $Y$.

\begin{Definition}
Suppose $(X,<_X)$ and $(Y,<_Y)$
are well-ordered sets. An order-preserving function $f:X\to Y$ is a \textbf{minimal injection}
if it is one-to-one and $f\leq g$ for any order-preserving injection $g:X\to Y$.
\end{Definition}

\begin{Problem}
Suppose $X\subset Y$ and the well-ordering of $X$ is inherited from $Y$.
Show the inclusion $i\colon X\to Y$ is minimal if and only if
$X$ is an initial subset of $Y$.
\end{Problem}

\begin{Problem}
Suppose $f\colon X\to Y$ is an order-preserving injection
of well-ordered sets. Show $f$ is minimal if and only if $f(X)$ is an initial
subset of $Y$.
\end{Problem}

\begin{Problem}
Suppose $(X,<_X)$ and $(Y,<_Y)$
are well-ordered sets. Show that if for every $x\in X$ there is an order-preserving
injection from $(-\infty,x]$ to $Y$, then
there is an order-preserving injection
from $X$ to $Y$.
\end{Problem}

\begin{Problem}
Suppose $(X,<_X)$ and $(Y,<_Y)$
are well-ordered sets. Show
there is an order-preserving injection
from one of them to the other whose image is an initial subset.
\end{Problem}

\begin{Definition}
An \textbf{ordinal number} in $U$ is the equivalence class of a well-ordered subset of $U$.
\end{Definition}

\begin{Problem}
Show that the set of ordinals in $U$ is well-ordered.
\end{Problem}

\section{Cardinal numbers}

\begin{Definition}
Two subsets $A$ and $B$ of $U$ are of the same \textbf{cardinality} if there is a bijection between them.
\end{Definition}

\begin{Problem}
Given a natural number $m\in \mathbb N$ find a bijection $f:\mathbb N\to [n,\infty)$.
\end{Problem}

\begin{Problem}
Suppose $X_1\cap Y_1=\emptyset$ and $X_2\cap Y_2=\emptyset$. Show $X_1\cup Y_1$ is of the same cardinality as $X_2\cup Y_2$ if $X_1$ is of the same cardinality as $X_2$ and $Y_1$ is of the same cardinality as $Y_2$.
\end{Problem}

\begin{Problem}
Let $\{X_s\}_{s\in S}$ and $\{Y_s\}_{s\in S}$ be two families consisting of mutually disjoint sets ($X_s\cap X_t=\emptyset$ for all $s\ne t$ and $Y_s\cap Y_t=\emptyset$ for all $s\ne t$).
 Show $\bigcup_{s\in S} X_s$ is of the same cardinality as $\bigcup_{s\in S} Y_s$ if each $X_s$ is of the same cardinality as $Y_s$ for $s\in S$.
\end{Problem}

\begin{Problem}
If $U$ is well-ordered, show that for every two subsets of $U$ there is an injection from one of them to the other.
\end{Problem}

\begin{Theorem}[Schr\" oder-Bernstein]
Suppose $A,B$ are two subsets of $U$. If there are injections from each of them to the other, then they are of the same cardinality.
\end{Theorem}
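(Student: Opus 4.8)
The plan is to manufacture a bijection directly from the two injections, using the classical chain-tracking construction; no well-ordering of $U$ and no Axiom of Choice will be required. Write $f\colon A\to B$ and $g\colon B\to A$ for the given injections. The only obstruction to using $g^{-1}$ on all of $A$ is the set $A\setminus g(B)$ of elements not hit by $g$; these are \emph{forced} to move forward by $f$. My plan is to propagate this forced set along $g\circ f$ and thereby split $A$ into a piece handled by $f$ and a piece handled by $g^{-1}$.

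First I would define the ``$f$-part'' of $A$. Set $C_0:=A\setminus g(B)$ and recursively $C_{n+1}:=g(f(C_n))$ for $n\ge 0$, and put $C:=\bigcup_{n\ge 0}C_n$ (an arbitrary union of the kind already sanctioned in the text). Intuitively $C$ gathers exactly those elements of $A$ whose backward trajectory under $f$ and $g$ terminates in $A\setminus g(B)$. I would then define $h\colon A\to B$ by
$$h(x):=\begin{cases} f(x), & x\in C,\\ g^{-1}(x), & x\in A\setminus C.\end{cases}$$
The second clause is legitimate because $C\supset C_0=A\setminus g(B)$, so any $x\in A\setminus C$ lies in $g(B)$ and, $g$ being injective, has a unique preimage $g^{-1}(x)\in B$.

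Next come the two verifications. For injectivity, $f$ is injective on $C$ and $g^{-1}$ is injective on $A\setminus C$, so the only possible collision is $f(c)=g^{-1}(a)$ with $c\in C$ and $a\in A\setminus C$; applying $g$ gives $a=g(f(c))$, and if $c\in C_n$ then $a\in C_{n+1}\subset C$, contradicting $a\notin C$. For surjectivity, given $b\in B$ look at $g(b)\in A$: if $g(b)\notin C$ then $h(g(b))=g^{-1}(g(b))=b$; if instead $g(b)\in C$, then $g(b)\in g(B)$ rules out $n=0$, so $g(b)\in C_n$ for some $n\ge1$, and unwinding $C_n=g(f(C_{n-1}))$ together with injectivity of $g$ produces $c'\in C_{n-1}\subset C$ with $b=f(c')$, so that $h(c')=b$. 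Hence $h$ is a bijection and $A$, $B$ have the same cardinality.

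The step I expect to be the main obstacle is the surjectivity bookkeeping: one must cleanly separate the cases $g(b)\in C$ and $g(b)\notin C$ and, in the first case, back out an $f$-preimage using the recursive definition of $C_n$; the remaining checks are routine. As an optional reframing that fits the order-theoretic flavor of the earlier sections, I would remark that $C$ is the least fixed point of the inclusion-preserving transformation $S\mapsto A\setminus g(B\setminus f(S))$ of the expanded universe $2^A$, so the whole construction can be read as finding a fixed point of a monotone self-map of $2^A$, entirely within the framework of subsets and their ordering developed above.
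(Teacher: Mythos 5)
Your proof is correct, and it takes a genuinely different route from the paper's. You work directly with the two injections $f\colon A\to B$ and $g\colon B\to A$, propagate the ``unreachable'' set $C_0=A\setminus g(B)$ forward under $g\circ f$, and patch $f$ and $g^{-1}$ together along the decomposition $A=C\cup(A\setminus C)$; your injectivity and surjectivity checks are complete (the collision case $f(c)=g^{-1}(a)$ and the case analysis on $g(b)$ are exactly the right points to verify), and your closing remark that $C$ is the least fixed point of the monotone map $S\mapsto A\setminus g(B\setminus f(S))$ on $2^A$ is also accurate. The paper proceeds differently: it first reduces to the situation $A\subset B$ with a single injection $f\colon B\to A$, and then studies the orbits of $f$, i.e.\ the equivalence classes of the relation $b\sim c$ when $f^m(c)=f^k(b)$ for some $k,m\ge 1$; the intended bijection is then built orbit by orbit, acting as a shift on the orbits that meet $B\setminus A$ and as the identity on the rest. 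That orbit approach connects nicely with the paper's material on equivalence relations and partitions, but as printed it is only a sketch: the bijection is never written down and the concluding ``Notice that\ldots'' step is left to the reader. Your construction, by contrast, produces a fully explicit bijection with all verifications carried out, uses only tools already sanctioned in the text (arbitrary unions, transformations of subsets, the ordering of $2^A$), and makes it evident that no choice principle is invoked; what you give up is the dynamical picture of orbits, which is the conceptual payoff of the paper's version.
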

\begin{proof}
We may assume $A\subset B$ and $f:B\to A$ is an injection.\\
Consider the orbits of $f$. Those are equivalence classes of the following relation on $B$: $b\sim c$
if there are $k,m\ge 1$ such that $f^m(c)=f^k(b)$. Notice that for any $C\subset B$ containing $f(A)$ the orbits of $f|C$ are either equal to orbits of $f$ or they miss a finite number of initial elements of the corresponding orbit of $f$ which is infinite.
\end{proof}

\begin{Definition}
A \textbf{cardinal number} in $U$ is an equivalence class of a subset of $U$ under the relation of having the same cardinality.
\end{Definition}

Notice the set of cardinal numbers in $U$ is partially ordered in view of Schr\" oder-Bernstein's Theorem.
To make the set of cardinal numbers in $U$ totally ordered we need a new axiom.

\begin{Axiom}[Axiom of Comparing Cardinal Numbers]
Given two sets there is an injection of one of them into the other.
\end{Axiom}

Here is another problem in the spirit of Russell's Paradox.
\begin{Problem}
Show that the set of ordinals in $U$ does not admit an injection into $U$ if $U$ is not well-ordered.
\end{Problem}

\begin{Problem}
Show Axiom of Comparing Cardinal Numbers is equivalent to the Well-ordering Axiom.
\end{Problem}

Notice there is a natural function from ordinals in $U$ to cardinal numbers in $U$. If it is a surjection, it has a natural section. 

\begin{Problem}
If $U$ can be well-ordered, then there is order-preserving injection from the set of cardinal numbers in $U$ to the set of ordinals in $U$.
\end{Problem}

\begin{Problem}
If $U$ can be well-ordered, then the set of cardinal numbers in $U$ is well-ordered.
\end{Problem}

\end{document}